\newtheorem{Thm}{Theorem}[section]
\newtheorem{Lem}[Thm]{Lemma}
\newtheorem{Cor}[Thm]{Corollary}
\theoremstyle{remark}
\newtheorem{Def}[Thm] {Definition}
\newtheorem{Rem}[Thm] {Remark}
\newtheorem{Exa}[Thm] {Example}
\newcommand{\length}{\operatorname{\length}}
\def\length{\operatorname{length}}
\def\RR{\mathbb{R}}
\def\RR{{\mathbb R}}
\newtheoremstyle{mydefinition}{}{}{\normalfont}{0pt}{\scshape}{.}{.5em}{}
\theoremstyle{mydefinition}
\newtheoremstyle{myremark}{}{}{\small\normalfont}{0pt}{\small\scshape}{.}{.5em}{}
\theoremstyle{myremark}
\numberwithin{equation}{section}
\def\beq{\begin{equation}}
\def\eeq{\end{equation}}
\def\beqn{\begin{equation*}}
\def\eeqn{\end{equation*}}
\def\beqy{\begin{eqnarray}}
\def\eeqy{\end{eqnarray}}
\def\beqyn{\begin{eqnarray*}}
\def\eeqyn{\end{eqnarray*}}
\def\length{{\rm length}}
\def\b1{{\boldsymbol{1}}}
\def\IR{{\mathbb R}}
\def\eps{\varepsilon}
\def\eps{\varepsilon}
\title{Stable and non-symmetric pitchfork bifurcations}
\author{Enrique Pujals , Michael Shub \footnote{ This work  is partially supported 
by the Smale institute.}  and Yun Yang}
\begin{document}

\maketitle

\begin{abstract}In this paper, we present a criterion for pitchfork bifurcations of smooth vector fields based on a topological argument.     Our result expands Rajapakse and Smale's result \cite{RS2} significantly.  Based on our criterion, we present  a class of families of stable and non-symmetric vector fields undergoing a
pitchfork bifurcation.  
\end{abstract}

%

\section{Introduction}
In this paper we consider the bifurcation of the isolated equilibria of the locally defined vector fields in $\RR^n$. This well studied subject has recently had some fresh observations by Rajapakse and Smale \cite{RS2} concerning the pitchfork bifurcation and its relevance for biology. It is our intention to expand their treatment by showing that there are significantly new subtleties.

A phenomenon is called observable if it is stable under small perturbation.   The dogma of bifurcation theory reasonably asserts that the dynamics of the vector fields and their bifurcations used to explain the observable phenomenon should be stable as well. 
It is known that the only generic and stable simple non-hyperbolic bifurcation with one-dimensional parameter is the saddle-node bifurcation, in which zeros of adjacent indices are created or cancelled. Hence the pitchfork bifurcation known as the transition from a single stable equilibrium to two new stable equilibria separated by a saddle is not generally stable.

While the pitchfork bifurcation is not generally stable, it is stable under certain additional hypothesis such as symmetry (namely equivariant branching) or the vanishing of a certain second derivative at the bifurcation point (\cite{R1};Theorem 7.7, \cite{K},etc.).  The stability and the symmetry of the pitchfork bifurcation is usually expressed in terms of its normal form 
  $\dot{u}=u\eps-u^3.$
This family of vector field is invariant under the involution $u\rightarrow -u$.   
Rajapakse and Smale \cite{RS1,RS3,RS2} are most interested in the case when one stable equilibrium gives rise to two new stable equilibria after the bifurcation and without symmetry.   They argue that if the state of a cell is modeled as a stable equilibrium, then the cellular division should give rise to two new stable equilibria after division. They model this phenomenon with a non-symmetric pitchfork bifurcation in which one stable equilibrium gives rise to three, two new stable and one unstable.  We generalize their results significantly and supply complete proofs.  

Consider an one-parameter family of $C^{2}$ vector fields in $\IR^n$ given by
$\dot{x} = V(x, \eps)$ where $\eps\in \IR^1.$
A point $(x_0,\eps_0)$ is \emph{simple non-hyperbolic}  if $D_x V(x_0, \eps_0)$ has a simple eigenvalue $\lambda=0$
and all other eigenvalues are not on the imaginary axis. A fixed point $(x,\varepsilon)=(x_0,\eps_0)$  is said to undergo a \emph{$1\rightarrow \text{many}$ bifurcation}, if the flow has one and only one fixed point in a neighborhood of $x_0$  for any sufficient close $\varepsilon\leq\eps_0$  while the flow has many fixed points around $x_0$ for any sufficient close $\eps>\eps_0$. A fixed point $(x,\varepsilon)=(x_0,\eps_0)$  is said to undergo a \emph{$\text{many }\rightarrow 1$ bifurcation}, if the flow has many fixed points in a neighborhood of $x_0$  for any sufficient close $\varepsilon\leq\eps_0$ while the flow has one and only one fixed point around $x_0$ for any sufficient close $\eps>\eps_0$. We say that the bifurcation is of {\em pitchfork-type}  if there is a neighborhood of $x_0$ such that $x_0$ is the unique non-hyperbolic zero in the neighborhood for any sufficient cloase $\eps\leq \eps_0$, $x_0$ continues smoothly $x_{\eps}$ as one of the equilibrium points for any sufficient close $\eps>\eps_0$ and the number of the equilibrium points for any sufficient close $\eps>\eps_0$ is greater than or equals to three. Moreover it is called a {\em pitchfork bifurcation} if the number of new equilibrium for pitchfork-type bifurcation is exactly three. 

In the literature, the hypotheses to guarantee the existence of a pitchfork bifurcation generally contain one of the following two types assumptions:

\noindent{\em Type a}. The set of zeros $(x_{t},,\eps_t)$  consists of one stable equilibrium of index $-1$  for $t<0$ which continue smoothly to $(x_t,\eps_t)$ of index $1$ for $t>0$, i.e. an eigenvalue at the zeros changes from negative to positive. See \cite{CR1,CR2,CH,RS2,LSW}.

\noindent{\em Type b}. The equation has some symmetry which is frequently exhibited by a normal form with respect to a center manifold which is assumed to be explicitly known.  See Chapter 7 in \cite{K}, Chapter 19 in \cite{W} and references therein. 

In this paper, we assume neither of these scenarios. We work with $n$-dimensional vector fields and prove that Type a follows from our hypotheses. Our hypotheses are much easier to check compared with the hypotheses of Type a or Type b. We also give examples without symmetry and examples to show that all of the hypotheses are necessary for the existence of pitchfork-type and  pitchfork bifurcations. 

 An essential part of our treatment relies on a topological argument.  We refer to \cite{KS,N, R, S} for some work in the literature using topological approaches in dealing with bifurcation problems.  Here we consider under which conditions the bifurcation of an isolated simple non-hyperbolic equilibrium with non-zero index  gives rise to many equilibria  with non-zero index. We are interested in the bifurcation of stable equilibria which are interior to the basin of attraction.
Our criteria for the bifurcation are multidimensional (See (P0)-(P2) below) and are expressed in terms of the derivative at the bifurcation point. We do not invoke the explicit form of a reduction to the center manifold, even for (P3).  Based on our  criterion, we give an example of a family of vector fields without symmetry which undergoes a pitchfork bifurcation.

Fix $(x_0,\eps_0)$.  Denote by $\mathcal{F}$ to be the set of one parameter vector fields
  $V\in\mathcal{F}$ such that it satisfies the following conditions:
  \begin{enumerate}
\item[(P0)] $V(x,\eps_0)$  has an isolated simple non-hyperbolic equilibrium $x_0$ with non-zero index. 
\item[(P1)]  $\frac{\partial V}{\partial\eps}\vert_{(x_0,\eps_0)}\in \text{image}(D_xV);$
\item[(P2)] there exists $\omega=(\omega_1,\cdots,\omega_{n+1})^{\top}$ such that 
$$DV(x_0,\eps_0)\omega=0\text{ and }\omega_{n+1}\neq 0,
\text{ and } D(\text{det}(D_xV))(x_0,\eps_0)\omega\neq 0.$$
\end{enumerate}
Our conditions are easy to check. Here we make some comments.
\begin{itemize}
\item (P0) is immediate if there exists a small ball $B(x_0)$ around $x_0$ such that for any $\eps<\eps_0$ close enough to $\eps_0$,  there is one and only one zero  inside the small ball $B(x_0)$ and it is transversal, i.e., $0$ is not an eigenvalue of the determinant.  Then the index of the zero at $\eps=\eps_0$ is either $1$ or $-1$ (See Section 2.1). 
\item (P1) is verified if the rank of the derivative $DV(x_0,\eps_0)$ is $n-1$. 
\item In coordinates $(x_1,\cdots, x_n)$ given by the eigenspaces of $D_xV(x_0,\eps_0)$ with $(1,0,\cdots,0)$ the zero eigenvector, (P2) is true iff $\frac{\partial^2 V_1}{\partial x\partial \eps}\neq 0.$
\end{itemize}
\begin{Thm}[Bifurcation]\label{BB}Every $V\in \mathcal{F}$  undergoes a pitchfork-type bifurcation, i.e.,  it is a  $1\rightarrow k$ or $k\rightarrow 1, 3\leq k\leq+\infty $ bifurcation at $(x_0,\eps_0)$. 
\end{Thm}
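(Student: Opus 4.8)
The plan is to reduce the $n$-dimensional problem to a single scalar bifurcation equation by Lyapunov--Schmidt and then read off the bifurcation from the first- and second-order behaviour of that scalar function, using the topological index as the global bookkeeping device. Write $x_0=0$, $\eps_0=0$ and set $L=D_xV(0,0)$. By (P0) the kernel and cokernel of $L$ are one-dimensional; fix a zero eigenvector $e$, a complement $X_1$ with $\RR^n=\langle e\rangle\oplus X_1$, and a linear functional $\psi$ vanishing on $\operatorname{image}(L)$. Solving the $\operatorname{image}(L)$-component of $V=0$ for $y\in X_1$ as a function $y(s,\eps)$ (implicit function theorem, since $L|_{X_1}$ is invertible) and substituting $x=se+y(s,\eps)$, the equilibria of $V(\cdot,\eps)$ near $0$ correspond bijectively to the zeros in $s$ of the scalar function $g(s,\eps):=\psi\big(V(se+y(s,\eps),\eps)\big)$. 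First I would record the companion identity $\det D_xV\big(se+y(s,\eps),\eps\big)=\Theta(s,\eps)\,\partial_s g(s,\eps)$ with $\Theta(0,0)\neq 0$, obtained by computing $D_xV$ in the basis $\{e+\partial_s y,\ \text{a basis of }X_1\}$; in particular the index of a nondegenerate equilibrium equals $\operatorname{sign}\Theta\cdot\operatorname{sign}\partial_s g$.

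Next I would compute the low-order jet of $g$ at the origin. Because $Le=0$ and $\partial_s y(0,0)=0$ one gets $\partial_s g(0,0)=0$ for free, while (P1) gives $\partial_\eps g(0,0)=\psi(\partial_\eps V)=0$; thus $\nabla g(0,0)=0$. The identity above shows that the directional derivative $D(\det D_xV)(0,0)\,\omega$ along any $\omega\in\ker DV(0,0)$ equals $\Theta(0,0)$ times the directional derivative of $\partial_s g$ in the corresponding $(s,\eps)$-direction; since (P2) furnishes such an $\omega$ with $\omega_{n+1}\neq 0$ on which this is nonzero, it translates precisely into $(\partial_{ss}g,\partial_{s\eps}g)(0,0)\neq(0,0)$. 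Finally I would feed in (P0): the index of $x_0$ is the one-dimensional degree of $g(\cdot,0)$ at $0$ times $\operatorname{sign}\det(L|_{X_1})$, and this degree is nonzero exactly when $g(\cdot,0)$ changes sign at $0$. As $g(s,0)=\tfrac12\partial_{ss}g(0,0)s^2+o(s^2)$, a sign change forces $\partial_{ss}g(0,0)=0$ (otherwise $g(\cdot,0)\sim s^2$ has index $0$, contradicting (P0)). Combined with the previous step this yields the decisive normalization $\partial_s g(0,0)=\partial_{ss}g(0,0)=0$ and $a:=\partial_{s\eps}g(0,0)\neq 0$.

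With this jet in hand the bifurcation is essentially the pitchfork/cusp picture sliced by the parameter $\eps$. The fold locus $\{\partial_s g=0\}$ is, by the implicit function theorem applied to $\partial_s g$ (whose $\eps$-derivative is $a\neq 0$), a smooth graph $\eps=\phi(s)$ with $\phi(0)=\phi'(0)=0$ that lies, to leading order, entirely on one side of $\eps=0$. On the opposite side $g(\cdot,\eps)$ has no critical point, hence is strictly monotone and has exactly one zero, which exists because the degree is nonzero; this zero is the smooth continuation $x_\eps$. On the fold side there are two critical points of $g(\cdot,\eps)$ whose critical values I would show to have opposite signs (a rescaling $s=\sqrt{|\eps|}\,\sigma$ reduces this to the sign of the leading odd term of $g(\cdot,0)$), producing the continuation branch together with two new branches $s\approx\pm\sqrt{|\eps|}$, i.e.\ at least three equilibria; the orientation decides whether this side is $\eps>0$ or $\eps<0$, hence whether the bifurcation is $1\to k$ or $k\to 1$. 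Conservation of the degree across $\eps=0$ makes the counts on the two sides consistent, and the parity it imposes on the number of simple zeros rules out $k=2$.

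The main obstacle is precisely this last step carried out under the bare $C^2$ hypothesis rather than in the generic smooth case. Two points require care. First, the smooth continuation branch $x_\eps$ must be produced at the degenerate point where $\partial_s g=0$, so the implicit function theorem does not apply directly; I expect to obtain it from the fold curve together with the sign of the critical values, or equivalently by a Weierstrass-type factorization $g=(s-\beta(\eps))\,q(s,\eps)$ isolating the transverse branch. Second, with only $C^2$ regularity the higher-order terms of $g(\cdot,0)$ are not controlled by a finite Taylor jet, so the count on the many side can be any $k$ with $3\le k\le\infty$, and the ``exactly one'' statement on the other side must be argued topologically---via monotonicity of $g(\cdot,\eps)$ off the fold locus and invariance of the degree---rather than by explicit root counting. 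Establishing these two facts rigorously, rather than the routine jet computations that precede them, is where the real work lies.
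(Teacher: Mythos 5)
Your reduction step is genuinely different from the paper's and is correct. You use Lyapunov--Schmidt to produce the scalar bifurcation function $g(s,\eps)$ together with the identity $\det D_xV(\Phi(s,\eps),\eps)=\Theta(s,\eps)\,\partial_s g(s,\eps)$, $\Theta(0,0)\neq0$, along the solution manifold of the range-component equation. The paper instead suspends the system (adding $\dot\eps=0$), passes to a center manifold $y=h(u,\eps)$, and transfers the index via the topological equivalence of flows and the product formula $\operatorname{ind}_V(0)=\operatorname{ind}_{V^c}(0)\cdot(-1)^{\#\{\lambda_i>0\}}$ (Lemma 5.2), then proves $\partial^2 F/\partial u\partial\eps\neq0$ by a Jacobi-formula computation of $\partial_u\det D_{(u,y)}V$. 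Your determinant identity accomplishes in one stroke what the paper's Claim 1 does, and your translation of (P0)--(P2) into $\partial_s g=\partial_{ss}g=\partial_\eps g=0$, $\partial_{s\eps}g\neq0$ at the origin lands exactly on the hypotheses of the paper's Corollary 6.3; since $\ker DV(0,0)$ is precisely the tangent space to the Lyapunov--Schmidt solution manifold, the use of (P2) is legitimate. Up to this point your argument is a clean, arguably more elementary, alternative to Sections 5 and 7 of the paper.

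The gap is in the scalar endgame, and your proposed fixes for the two points you flag would not work as stated. First, the fold curve $\eps=\phi(s)$ obtained from $\partial_s g=0$ is only $C^1$ with $\phi(0)=\phi'(0)=0$; ``to leading order it lies on one side of $\eps=0$'' is vacuous, since a $C^1$ function flat at the origin may change sign infinitely often, and under (P0)--(P2) alone $\partial_s g(s,0)$ genuinely need not have constant sign for small $s\neq0$ (take $g(s,\eps)=s\eps+\rho(s)$ with $\rho'(s)=s^2(c+2\sin\ln|s|)$, $c$ slightly below $2$: all hypotheses hold, yet the fold curve crosses $\eps=0$ infinitely often). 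So uniqueness on one side cannot be extracted from one-sidedness of the fold locus; this is exactly the delicate point of the paper's Lemma 6.2 as well. Second, your ``critical values of opposite sign after rescaling $s=\sqrt{|\eps|}\,\sigma$'' presupposes a leading odd term of $g(\cdot,0)$, which a $C^2$ (even $C^\infty$) function with an isolated zero of nonzero index need not possess. The paper avoids the critical-value analysis entirely: Lemma 6.1 constructs the continuation zero $s_\eps$ directly by a Newton iteration on the ball $B_{\eps^{1-\delta}}(0)$, shows $\partial_s g(s_\eps,\eps)$ has the sign of $\partial_{s\eps}g(0,0)\cdot\eps$ so that the index of $s_\eps$ flips across $\eps=0$, and then obtains at least two additional zeros on the flipped side purely from conservation of the total degree (Theorem 4.1). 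If you keep your Lyapunov--Schmidt front end, you should replace your last step by this index-flip-plus-degree-conservation argument for the ``many'' side, and supply an honest proof (not a leading-order one) for the ``exactly one'' side.
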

Theorem \ref{BB} implies Rajapakse and Smale's result. 
\begin{Cor}[\cite{RS2}] \label{RScor}Suppose the following conditions:
\begin{enumerate}
\item $\frac{dx}{dt}=V(x,\eps), x\in X, V(x_0,\eps)=0$ for $\varepsilon\leq \varepsilon_0$ and the determinant of the Jacobian of $V$ at $(x_0,\varepsilon_0) $ is zero. 
\item the eigenvalue of the Jabobian matrix satisfy  
\begin{eqnarray*}
&&\text{real}(\lambda_i)<0, i>1;
\lambda_1=0 \text{ and } \frac{d\lambda_1}{d\varepsilon}\vert_{(x_0,\varepsilon_0)}>0.
\end{eqnarray*}
\item the multiplicity of $V(x,\eps_0)$ at $x_0$ is three and the Poincar\`{e}-Index is $(-1)^n$ relative to a disk $B^n_r$ about $x_0$.
\end{enumerate}
These are sufficient conditions for the pitchfork bifurcation.
\end{Cor}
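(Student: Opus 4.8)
The plan is to deduce the corollary from Theorem~\ref{BB} by checking that the Rajapakse--Smale hypotheses (1)--(3) place $V$ inside the class $\mathcal{F}$, and then to use the multiplicity hypothesis to sharpen ``pitchfork-type'' to an honest pitchfork. First I would verify (P0). Hypothesis (2) says that $D_xV(x_0,\eps_0)$ has a simple eigenvalue $\lambda_1=0$ while all remaining eigenvalues have strictly negative real part; in particular none lies on the imaginary axis, so $x_0$ is a simple non-hyperbolic equilibrium. Hypothesis (3) gives Poincar\'e index $(-1)^n\neq 0$, which is exactly the non-vanishing of the index, and the finiteness of the multiplicity makes $x_0$ isolated. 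This establishes (P0).

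Next I would dispatch (P1), which here holds in the strong form $\partial_\eps V(x_0,\eps_0)=0$: since hypothesis (1) gives $V(x_0,\eps)\equiv 0$ for $\eps\le\eps_0$, differentiating this identity in $\eps$ and evaluating at $\eps_0$ yields $\partial_\eps V(x_0,\eps_0)=0\in\operatorname{image}(D_xV)$. The substantive condition is (P2). I would work in the eigenbasis of $A(\eps):=D_xV(x_0,\eps)$ singled out in the third bullet after the definition of $\mathcal{F}$, with $e_1=(1,0,\dots,0)$ the null eigenvector of $A(\eps_0)$. Because hypothesis (1) pins the equilibrium at $x_0$ for $\eps\le\eps_0$, the function $\lambda_1(\eps)$ is a genuine eigenvalue of $A(\eps)$ evaluated at the fixed point $x_0$, and first-order perturbation of the simple eigenvalue $0$ gives $\frac{d\lambda_1}{d\eps}\vert_{\eps_0}=[A'(\eps_0)]_{11}=\frac{\partial^2 V_1}{\partial x_1\partial\eps}(x_0,\eps_0)$, the left null eigenvector coinciding with $e_1^\top$ since $A(\eps_0)=\operatorname{diag}(0,B)$ with $B$ invertible. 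Hypothesis (2) gives $\frac{d\lambda_1}{d\eps}>0$, hence $\frac{\partial^2 V_1}{\partial x_1\partial\eps}\neq 0$, which by the equivalence recorded in that bullet is precisely (P2). Therefore $V\in\mathcal{F}$ and Theorem~\ref{BB} applies, producing a $1\to k$ or $k\to 1$ bifurcation with $3\le k\le+\infty$. Since for $\eps<\eps_0$ one has $\lambda_1(\eps)<0$ together with the negative transverse spectrum, $x_0$ is a hyperbolic sink and hence the unique zero in a small ball, so the bifurcation is of type $1\to k$.

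It remains to show $k=3$, and this is where I expect the main obstacle. I would reduce to the one-dimensional center manifold, where the flow reads $\dot u=f(u,\eps)$, and argue that the multiplicity-three hypothesis forces $f(\cdot,\eps_0)$ to have a zero of multiplicity exactly three at $u=0$, i.e. $f=\partial_u f=\partial_u^2 f=0\neq\partial_u^3 f$ there. The delicate point is the transfer of multiplicity through the reduction: because the transverse block $B$ is invertible and local multiplicity is multiplicative across the center/hyperbolic splitting, the $\RR^n$ multiplicity three of hypothesis (3) descends to multiplicity three for $f$. Finally, a smooth one-variable function with a triple zero admits, by the Malgrange preparation theorem, a factorization $f(u,\eps)=\bigl(u^3+a_2(\eps)u^2+a_1(\eps)u+a_0(\eps)\bigr)\,h(u,\eps)$ with a nonvanishing unit $h$, so $f(\cdot,\eps)$ has at most three zeros near $0$; this gives $k\le 3$, and combined with the lower bound $k\ge 3$ from Theorem~\ref{BB} we conclude $k=3$, an honest pitchfork. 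I would phrase the upper bound through this preparation argument rather than through any explicit normal form, so as to stay within the paper's philosophy of not invoking the reduced vector field in closed form.
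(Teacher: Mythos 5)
Your proposal is correct and follows essentially the same route as the paper: verify that hypotheses (1)--(3) yield (P0), (P1) (via $V(x_0,\eps)\equiv 0$ for $\eps\le\eps_0$) and (P2) (via the derivative of the simple zero eigenvalue), invoke Theorem~\ref{BB}, and then use the multiplicity-three hypothesis to cap the number of new zeros at three. The only difference is that you flesh out the final step (transfer of multiplicity to the center manifold plus the Malgrange preparation bound), which the paper asserts in a single sentence; that added detail is welcome but does not change the argument.
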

The condition (P1) is trivial in Corollary \ref{RScor} since $x_0$ is the only zero points for any $\varepsilon\leq \varepsilon_0$. Conditions (P0) and (P2) are also trivial in Corollary \ref{RScor}. The multiplicity assumption in Corollary \ref{RScor} implies the bifurcation given in Theorem \ref{BB} is exactly one to three.  Hence Theorem \ref{BB} is much more general. We refer to Section 2 for an example where P0,P1,P2 are not trivial while Theorem \ref{BB} applies. Moreover, corollary \ref{RScor} may be false if the conditions are not satisfied (see section \ref{failing}).

As one may have noticed that one of the key points in Theorem \ref{BB} is that we consider the derivative of the determinant of $DV$, instead of $V, DV, D^2V$ as the classical argument goes.  The proof of theorem \ref{BB} goes in two steps. Here is an outline:

\noindent{\em Step 1}. From the fact that the equilibrium  is a simple non-hyperbolic point, it follows the there is a center manifold normally hyperbolic associated to it. Moreover, it is  shown that the index property can be reduced to the index restricted to the center manifold. 

\noindent{\em Step 2}. P0 and P1 guarantee a continuation of the zero to any parameter value near the bifurcation parameter. This follows from considering the dynamics of vector fields along the center manifolds and then proving  the fact that P0 and P1 are carried over to the dynamics along the center manifold. Moreover, P2, i.e., the condition on the derivative of the determinant implies that the bifurcation is of pitchfork-type, meaning that at least two new zeros with different indices arise after the bifurcation. 

A natural question is to consider how many equilibria appear in Theorem \ref{BB}.   
  The following theorem gives a  criterion for the existence of a pitchfork bifurcation which doesn't depend on the multiplicity hypothesis in Corollary \ref{RScor}.  
Let $\mathcal{G}\subset \mathcal{F}$ be such that any $V\in \mathcal{G}$ is of the form:
$\begin{cases}\dot{u}=F(u,y,\eps)\\
\dot{y}=My+G(u,y,\eps)
\end{cases}$
where $u\in\mathbb{R}^1$ and $y\in\mathbb{R}^{n-1}$, the square matrix $M$ has eigenvalues with only non-zero real parts and 
$$F(0,0,0)=0, DF(0,0,0)=0, G(0,0,0)=0, DG(0,0,0)=0,$$
satisfy one extra condition:
 \begin{enumerate}
\item[(P3)]
$D_{uu}\det DV\vert_{(0,0,0)}-D_y(\det DV)(M^{-1}G_{uu})\vert_{(0,0,0)}\neq 0.$
\end{enumerate}
We give a comment here about (P3). 
\begin{itemize}
\item In the case that the center manifold is explicitly known and  in the presence of conditions P0,P1, P2, (P3) is equivalent to the usual hypotheses that the third derivative of $V$ restricted to the center manifold is not zero. But we re-emphasis that (P3) doesn't require knowledge of the center manifold which might be difficult to compute.   
\end{itemize}

\begin{Thm}[Pitchfork Bifurcation]\label{EP}Every $V\in \mathcal{G}$ undergoes a $1\rightarrow 3$ or $3\rightarrow 1$ bifurcation.
\end{Thm}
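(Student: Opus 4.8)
The plan is to transport the whole problem onto the one-dimensional center manifold, reduce the count of equilibria to counting zeros of a scalar field, and use (P3) to pin down the cubic coefficient of that field.

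First I would invoke Step~1 in the proof of Theorem~\ref{BB}: since the origin is a simple non-hyperbolic equilibrium and $M$ is hyperbolic, there is a one-dimensional center manifold $W^{c}=\{y=h(u,\eps)\}$ with $h(0,0)=0$ and $Dh(0,0)=0$, and every equilibrium of $V$ near $0$ lies on $W^{c}$. Hence, for each $\eps$, the zeros of $V$ near $0$ are exactly the zeros of the scalar reduced field $f(u,\eps):=F(u,h(u,\eps),\eps)$, and counting the former is the same as counting the latter. From $DF(0,0,0)=0$ and $Dh(0,0)=0$ one reads off $f(0,0)=f_u(0,0)=f_\eps(0,0)=0$, while (P2) gives the transversality $f_{u\eps}(0,0)\neq 0$. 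Moreover Theorem~\ref{BB} already guarantees that the bifurcation is pitchfork-type, so on one side of $\eps_0$ the field $f(\cdot,\eps)$ has at least three zeros; this is incompatible with $f_{uu}(0,0)\neq 0$ (which, by continuity of $f_{uu}$, would make $f_u(\cdot,\eps)$ monotone and force at most two zeros), so necessarily $f_{uu}(0,0)=0$.

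The heart of the matter is to translate (P3) into the non-degeneracy $f_{uuu}(0,0)\neq 0$. Differentiating the invariance equation $Mh+G(u,h,\eps)=h_u\,f(u,\eps)$ twice in $u$ at the origin yields $h_{uu}(0,0)=-M^{-1}G_{uu}(0,0,0)$. Consequently, by the chain rule, the left-hand side of (P3) is precisely $\tfrac{d^{2}}{du^{2}}\big[\det D_xV(u,h(u,\eps),\eps)\big]$ at $(0,0)$, i.e.\ the second $u$-derivative of the restriction of $\det D_xV$ to $W^{c}$. I would then expand this restriction through the Schur factorization $\det D_xV=\det(M+G_y)\,\big(F_u-F_y(M+G_y)^{-1}G_u\big)$ combined with the invariance equation; solving the latter for $h_u$ shows that along $W^{c}$ the scalar Schur factor agrees with $f_u$ up to order three in $(u,\eps)$. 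Since $\det(M+G_y)|_{0}=\det M\neq 0$ and the vanishing $f_{uu}(0,0)=0$ kills the cross term, this gives $\big(\det D_xV|_{W^{c}}\big)_{uu}(0,0)=\det M\cdot f_{uuu}(0,0)$, so (P3) is equivalent to $f_{uuu}(0,0)\neq 0$.

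Finally I would close the count. Because $f_{uuu}(0,0)\neq 0$, by continuity $f_{uuu}\neq 0$ on a neighborhood of $(0,0)$; hence $f_{uu}(\cdot,\eps)$ is strictly monotone and has at most one zero, so $f_u(\cdot,\eps)$ has at most two zeros and therefore $f(\cdot,\eps)$ has at most three zeros for every small $\eps$. Together with the lower bound of three supplied by Theorem~\ref{BB}, the number of equilibria on the many-side is exactly three, so $V$ undergoes a $1\to 3$ or $3\to 1$ bifurcation. The main obstacle is the middle step: verifying carefully that the hyperbolic block contributes only the nonzero constant $\det M$ and that all lower-order cross terms cancel, so that the second $u$-derivative of $\det D_xV|_{W^{c}}$ is genuinely a nonzero multiple of $f_{uuu}(0,0)$ and does not pick up spurious contributions from $f_{uu}$ or from the $\eps$-dependence of $h$.
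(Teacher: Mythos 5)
Your proposal is correct, but it closes the argument by a genuinely different mechanism than the paper. Both proofs share the same skeleton: the lower bound of three equilibria comes from Theorem~\ref{BB}; the identity $h_{uu}(0,0)=-M^{-1}G_{uu}(0,0,0)$, obtained by differentiating the invariance equation twice, is exactly how the paper shows that the left-hand side of (P3) equals the second $u$-derivative of $\det D_xV$ restricted to the center manifold (its condition (P3)$''$); and the upper bound is a Rolle-type count along the center manifold. The divergence is in how (P3) is then exploited. You push the translation one step further: via the Schur factorization $\det D_xV=\det(M+G_y)\bigl(F_u-F_y(M+G_y)^{-1}G_u\bigr)$ you identify $(\det D_xV|_{W^c})_{uu}(0,0)$ with $\det M\cdot f_{uuu}(0,0)$, so (P3) becomes the classical cubic nondegeneracy $f_{uuu}(0,0)\neq 0$ of the reduced field, and you count zeros of $f$ by two applications of Rolle. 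The paper never computes $f_{uuu}$; instead it applies the implicit function theorem to $\det(D_xV)(x,\eps)=0$ (legitimate because (P1)--(P2) force $D_\eps\det D_xV\neq 0$), obtains $\eps=\eps(x)$, and uses (P3) to show that $\eps(c(u))$ has a nondegenerate critical point at the bifurcation point; hence each level set $\{\eps(c(u))=\eps\}$ has at most two points, i.e.\ at most two zeros of $\det D_xV$ on the center manifold for each small $\eps$, and a single mean-value argument caps the number of equilibria at three. The step you flag as the main obstacle does go through: writing $S:=F_u-F_y(M+G_y)^{-1}G_u$, the invariance equation gives $S=f_u-F_y(M+G_y)^{-1}(h_{uu}f+h_uf_u)$, and since $F_y$, $f$, $f_u$, $h_u$ all vanish at the origin and $S_u(0,0)=f_{uu}(0,0)=0$, every cross term in $\bigl(\det(M+G_y)\,S\bigr)_{uu}$ dies, leaving $\det M\, f_{uuu}(0,0)$. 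What your route buys is an explicit bridge to the textbook pitchfork criterion --- once $f_{uuu}(0,0)\neq 0$ is in hand you could simply cite the paper's own one-dimensional Corollary~\ref{onethree} rather than redo the count --- and your systematic use of the $\eps$-dependent graph $h(u,\eps)$ is cleaner than the paper's fixed curve $c(u)$ when counting equilibria at parameters $\eps\neq\eps_0$. What the paper's route buys is that it avoids expanding the Schur factor altogether, at the price of keeping the nondegeneracy phrased in terms of $\det D_xV$ rather than the normal-form coefficient.
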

Endow $\mathcal{F}$ with the usual topology of $C^{\infty}$ maps. Based on Theorem \ref{EP}, we obtain the genericity of the pitchfork bifurcation.
\begin{Thm}\label{PG}The set of vector fields with pitchfork bifurcation is open and dense in $\mathcal{F}$. 
\end{Thm}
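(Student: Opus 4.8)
The plan is to prove Theorem~\ref{PG} by combining Theorem~\ref{EP} with a genericity/transversality argument showing that membership in $\mathcal{G}$ (or a set exhibiting pitchfork bifurcation) is both open and dense in $\mathcal{F}$. First I would establish \emph{openness}: conditions (P0)--(P2) defining $\mathcal{F}$ are expressed through the derivative data at $(x_0,\eps_0)$, and the extra condition (P3) is an open non-vanishing condition on a fixed finite jet of $V$ at the bifurcation point. Since all of these are given by the non-vanishing of continuous functions of the $C^{\infty}$-jet of $V$, the set where (P0)--(P3) all hold is open in the $C^{\infty}$ topology; by Theorem~\ref{EP} every such $V$ undergoes a genuine $1\to 3$ or $3\to 1$ bifurcation, so openness of the pitchfork-bifurcation set follows.

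The harder direction is \emph{density}: given an arbitrary $V\in\mathcal{F}$ and a neighborhood of it, I must produce an arbitrarily close $\tilde V$ satisfying (P3). The natural strategy is a local perturbation supported near $x_0$ that adjusts only the relevant third-order jet without disturbing (P0)--(P2). Concretely, in the splitting coordinates $(u,y)$ furnished by the definition of $\mathcal{G}$, I would add a small cubic term in the $u$-direction, say replacing $F$ by $F + \delta\,\chi(x)\,u^{3}$ where $\chi$ is a bump function equal to $1$ near $x_0$ and $\delta$ is small. The point is to check that such a perturbation (i) keeps $V\in\mathcal{F}$, i.e.\ preserves (P0), (P1), (P2)—which hold because the cubic term and its first derivatives vanish to high enough order at $(x_0,\eps_0)$ that the linearization $D_xV(x_0,\eps_0)$, its kernel $\omega$, and the first derivative $D(\det D_xV)$ are all unchanged; and (ii) moves the quantity $D_{uu}\det DV\vert_{(0,0,0)}-D_y(\det DV)(M^{-1}G_{uu})\vert_{(0,0,0)}$ in (P3) by an amount depending nontrivially on $\delta$. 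If the map $\delta\mapsto(\text{LHS of (P3)})$ has nonzero derivative at $\delta=0$, then for all but at most one value of $\delta$ in any interval the perturbed field lies in $\mathcal{G}$, giving density.

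I expect the main obstacle to be verifying step~(ii): that some admissible finite-parameter family of perturbations genuinely controls the combined expression in (P3), rather than leaving it invariant. Because (P3) involves $D_{uu}\det D_xV$, one must trace how a cubic perturbation in $F$ feeds through the determinant of the full Jacobian—the term $u^{3}$ contributes $3\delta u^{2}$ to $\partial F/\partial u$, hence a controllable $u$-dependence in the $(1,1)$ entry of $D_xV$, and I would compute its second $u$-derivative of the determinant at the origin to confirm a nonzero linear-in-$\delta$ effect while checking the correction term $D_y(\det DV)(M^{-1}G_{uu})$ is unaffected to first order. The clean way to package this is to exhibit a two- or three-parameter family of local perturbations and argue that the image of the associated jet map is transverse to the ``bad'' set $\{(\text{P3})=0\}$, so that the bad set has positive codimension and its complement is dense; density in $\mathcal{F}$ then follows by approximating any $V$ first by one satisfying (P0)--(P2) and then perturbing off the codimension-one locus where (P3) fails.
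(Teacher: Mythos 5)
Your proposal is correct and follows essentially the same route as the paper: openness is immediate since (P3) is a non-vanishing condition on a finite jet, and density is obtained by perturbing only the third-order terms of $V$ at $(x_0,\eps_0)$, using the key observation that the correction term $D_y(\det D_xV)(M^{-1}G_{uu})$ and the conditions (P0)--(P2) depend only on the $2$-jet and are therefore untouched, while $D_{uu}\det D_xV$ depends on the $3$-jet and can be moved freely. Your explicit bump-function perturbation $F\mapsto F+\delta\chi(x)u^3$ and the computation that it shifts $D_{uu}\det D_xV$ by $6\delta\det M\neq 0$ just makes concrete what the paper's terser argument asserts.
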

It is worth noting that there is a Banach space version of these theorems where index refer to index in the finite dimensional center manifold.

This paper is organized as follows. In Section 2, we give some examples: one of them  shows the lack of stability  of the pitchfork bifurcation under the general perturbation and one shows the existence of pitchfork bifurcation without symmetry.  In Section 3, we provide examples that show if any of the assumptions fail, there may be not pitchfork bifurcation and therefore show the necessity of our assumptions. 
 In Section 4, we introduce some preliminaries. As a preparation for the proof of Theorem \ref{BB}, we give some discussion on the index of fixed points in Section 5. Then in Section 6 we deliver some observations for the one-dimensional case.  In Section 7, we present Theorem \ref{BB} based on  center reduction techniques and the product property of the index of fixed points. We  also give the proof of Corollary \ref{RScor}.  In Section 8, we give the proof of Theorem \ref{EP} and Theorem \ref{PG} based on an analysis of graph transform.

To finish the introduction, we note that the conditions we are proposing are more general (less restrictive) to the ones available in the literature. We give sufficient and necessary conditions for the existence of stable pitchfork bifurcations in terms of the Taylor expansion of $V$ only at the point $(x_0,\eps_0)$. This makes the conditions significantly easier to check.   
In the previous studies of the pitchfork bifurcation, for instance the one provided by Crandall and Rabinowitz (see \cite{CR1, CR2}) and explained in  section 6.6 of the book “Methods of bifurcation” by Shui-Nee Chow and Jack Hale (see \cite{CH}) it is explicitly assumed that  for any parameter nearby the bifurcation one there is at least one zero, that is, there is a branch of solutions through $(0,0)$. In particular, that hypothesis is not assumed in our paper. Moreover, it is shown in example 3.3 that even assuming there is a branch of solutions, if the other condition (P2)  is not  satisfied then it could happen that there is no bifurcations. Also,  example 3.2 shows that the conditions provided in  \cite{RS1} is not  enough to guarantee a bifurcation if the zero of the initial vector field is allowed to move.

\section{Examples}
In this section, we will use our method to detect bifurcations. Compared with the classical method--normal form, our method tends to be more efficient. We also give a construction of a one-parameter family of vector fields without symmetry which undergoes a pitchfork bifurcation.

\begin{Exa}[Revisiting the Rajapakse-Smale example]\label{RS}Consider $$
\begin{cases}
& \dot{x}  =  y^2 -(\eps+1) y -x, \\
& \dot{y}  =  x^2 -(\eps+1) x- y.
\end{cases}
$$
near the equilibrium point $(x,y)=(0, 0)$ at the bifurcation parameter $\eps=0$.
We use Theorem \ref{BB} to verify the existence of bifurcation:  for $\eps<0$, around $(0,0)$
there is one and only one real equilibriums: $(x_0, y_0)=(0, 0)$. Moreover,
$
\det(DV)=1-(2x-(\eps+1))(2y-(\eps+1)).
$
Then 
$
\dfrac{\partial}{\partial \eps} \det(D_x V)|_{((x,y), \eps)=((0, 0), 0)} 
= -2. $
Now let's verify the condition (P2):
$\frac{\partial V}{\partial \eps}(0,0)=(0,0).$
Hence we have all of the conditions in Theorem \ref{BB} for Example \ref{RS}. Since the multiplicity of $(0,0)$ is three (one zero far away from $(0,0)$), we have a  pitchfork bifurcation. 

Here we also give the argument using the classical method, Normal form, as a comparison.  Under the change of coordinates 
$\begin{cases} u=x-y \\ v=x+y \end{cases}$, 
we get
$
\begin{cases}
\ \ \dot{u}  =  \eps u - uv, \\
\ \ \dot{v}  =  -(2+\eps) v + \frac{u^2+v^2}{2}. 
\end{cases}
$
For $\eps$ near 0, we reduce this vector field to a parametrized equation 
along the local center manifold, that is, 
$
 \dot{u} =  \eps u - u h(u, \eps), 
$
where $v=h(u, \eps)$ satisfies that $h(0, 0)=0$, $D_{(u, \eps)} h(0, 0)=0$, 
and 
$
\partial_u h(u, \eps) [\eps u - u h(u, \eps)] = -(2+\eps) h(u, \eps) + \frac{u^2+(h(u, \eps))^2}{2}. $
Taking $\eps=0$, and expanding $h(u, 0)=h_2 u^2 + O(u^3)$, we get $h_2=\frac14$. Therefore, we obtain
$
\dot{u} =  \eps u - \frac14 u^3.$
By Lemma \ref{onethree}, this vector field
 experiences a pitchfork bifurcation. 
\end{Exa}
Even though Example \ref{RS} doesn't have $(x,y)\rightarrow (-x,-y)$ symmetry, it does have center symmetry, i.e. $(x,y)\rightarrow (y,x).$
Here we would like to add a small perturbation of the Rajapakse-Smale example to destroy the symmetry.  We recall the definition of symmetry for a vector field. 
\begin{Def}[P278, \cite{K}]We say the  vector field
$\dot{x}=V(x,\eps),x\in\mathbb{R}^n,\eps\in\mathbb{R},$
has {\it symmetry} if there exists a matrix transformation $R:x\mapsto Rx$ satisfies:
$$RV(x,\eps)=V(Rx,\eps), R^2=I.$$
\end{Def}

\begin{Exa}[Pitchfork bifurcation without symmetry.]\label{RS2}Consider the 2-D ODE 
$$
\begin{cases}
& \dot{x}  =  y^2 -(\eps+1) y -x, \\
& \dot{y}  =  x^2 -(\eps+1) x- y+\eps^2.
\end{cases}
 $$
near the equilibrium point $(x,y)=(0, 0)$ at the bifurcation parameter $\eps=0$.
%
We note here that Corollary \ref{RScor} doesn't apply to this example. We use Theorem \ref{BB}  to verify the pitchfork bifurcation:  for $a<1$, 
there are only one equilibrium: $(x_0, y_0)=(0, 0)$ in a neighborhood of $(0,0)$. Moreover, $
\det(DV)=1-(2x-(\eps+1))(2y-(\eps+1)). $
Hence $\det(DV)=0$ at $((0,0),0)$ and
$\dfrac{\partial}{\partial \eps} \det(D_x V)|_{((x,y), \eps)}=-2. $
Hence we have all of the conditions in Theorem \ref{BB} for Example \ref{RS2}. Since the multiplicity of $(0,0)$ is three (one zero far away from $(0,0)$),  the pitchfork bifurcation follows. 
\end{Exa}
\begin{Exa}[Perturbation of the pitchfork bifurcation]
Consider the following family of vector fields:
\[\begin{cases}&\dot{x}=y^2-(\eps+1)y-x;\\
&\dot{y}=(1+\eps_0)x^2-(\eps+1)x-y.\end{cases}\]
Let $y^2-(\eps+1)y-x=0$ and $(1+\eps_0)x^2-(\eps+1)x-y=0$. Then we have $x=y^2-(\eps+1)y.$  Plugging it into the second one  at $\eps=0$ gives
$(1+\eps_0)(y^2-y)^2-(y^2-y)-y=0,$
i.e. $y^2((1+\eps_0)y^2-2(1+\eps_0)y+\eps_0)=0.$
Hence as long as $\eps_0\neq0$, we have four zeros 
$y=0,y=0, y=\frac{1+\eps_0\pm\sqrt{1+\eps_0}}{1+\eps_0}.$ Hence the vector field can only undergo a saddle-node bifurcation at $(0,0)$ while for $\eps_0=0$, we already know it undergoes a pitchfork bifurcation. 
We can view this as the perturbation of the Rajapakse and Smale example. As long as $\eps_0\neq 0$, the vector fields undergoes a saddle-node bifurcation which maybe hard to see numerically.  When $\eps_0=0$, it undergoes a pitchfork bifurcation. This shows clearly that pitchfork bifurcation is not stable. Moreover, it can't be because the derivative at the bifurcation point in $(x,\eps)$ has two dimensional kernel so the bifurcation can not be transversal to the zero section which is also clearly visible from the fact that the zero set is not locally a manifold. 
\end{Exa}

\section{Necessity of the conditions provided}
\label{failing}

In the present section, we show through examples that if any of our conditions are not satisfied then there is not pitchfork bifurcation.
\begin{Exa}[Missing (P0): $3\rightarrow 0\rightarrow3$ bifurcation]The vector fields 
$$\dot{x}=\eps x+x^2+x^3$$
has $3\rightarrow 0\rightarrow3$  bifurcation. 
 The zeros are given by $x=0$ and $x=\frac{-1\pm\sqrt{1-4\eps}}{2}$. We miss (P0) because the index of $x=0$ is zero. Even though the other conditions (P1),(P2) and (P3) are all satisfied, we don't have pitchfork bifurcation in this example.
 \end{Exa}
\begin{Exa}[Missing (P1): No bifurcation]The vector fields
$$\dot{x}=\eps-\eps x+x^3$$
has no bifurcation. 
It is easy to see that (P0), (P2) and (P3) all hold, but (P1) does not. There is only one solutions for small $\eps$.
This is because if (P1) does not hold, then the zeros lie on a smooth curve through $(x_0,\eps_0)$. So there is no bifurcation. In this example, the eigenvalue of the zeros go from positive to zero to positive. 
\end{Exa}
%
%

 \begin{Exa}[Missing (P2):  A moving center manifold. ]Consider
$$\begin{cases}
 &\dot{x}=x\eps+2xy+x^3\\
 &\dot{y}=2y+\eps
 \end{cases}.$$
We have 
$\frac{\partial V}{\partial \eps}=(0,1)$
which is transversal to the center direction $(1,0)$. However, this is not enough. 
Also $\frac{\partial \text{det}(DV)}{\partial \eps}(0,0)=\frac{\partial (\eps+2y+3x^2)}{\partial \eps}=2\neq 0.$
However, there is no bifurcation. The only equilibrium is 
$(0,-\frac{\eps}{2}).$ This is because the (P2) condition is not satisfied: the kernel of $DV$ is generated by $(1,0,0)$ and $(0,-\frac{1}{2},1)$. So 
$D(\det DV)(0,1,0)=0, D(\det DV)(0,-\frac{1}{2},1)=0.$
\end{Exa}
 

\begin{Exa}[Missing (P3): $1\rightarrow k, k>3$ bifurcation]Consider the vector fields:
$$\dot{x}=x(\eps-x^2\sin^2\frac{1}{x}-x^4).$$
We claim that this example satisfies  the conditions in our Main Theorem.  Now let's prove this claim. When $\eps=0$, 
$V(x,0)=-x^3\sin^2\frac{1}{x}-x^5.$
Since $x^2\sin^2\frac{1}{x}+x^4>0, \forall x\neq 0,$ 
we have 
$
V(x,0)=-x(x^2\sin^2\frac{1}{x}+x^4)
<0, \forall x>0.$
Similarly, we have $
V(x,0)>0, \forall x<0.$
Hence the index of $(0,0)$ is $-1.$
We have:
$\frac{\partial V(x,\eps)}{\partial \eps}(0,0)=0,$ and 
$\frac{\partial V(x,\eps)}{\partial x}=(\eps-x^2\sin^2\frac{1}{x}-x^4)+x(4x^3-2\sin\frac{1}{x}\cos\frac{1}{x}+2x\sin^2\frac{1}{x}).$
Hence $\frac{\partial V(x,\eps)}{\partial x}(0,0)=0,$
and $\frac{\partial^2 V(x,\eps)}{\partial \eps\partial x}=1\neq0.$  So it satisfies (P0),(P1) (P2) but not (P3). It undergoes a $1\rightarrow k,k>3$ bifurcation. One direct way to prove it is the compute the zeros for the vector field numerically. 
\end{Exa}
\begin{Exa}[Missing ``Half of  (P3)": $1\rightarrow k, k>3$ bifurcation]Consider
$$\begin{cases}&\dot{x}=2x^3-xy+xy^2-4x^5+x(\eps-x^4\sin^2\frac{1}{x}-x^6)\\
&\dot{y}=2y-4x^2\end{cases}.$$
This vector field has the same zeros as 
$\dot{x}=x(\eps-x^4\sin^2\frac{1}{x}-x^6)$
which undergoes a 1 to $k,k>3$ bifurcation. 
Even though it satisfies
$D_{uu}(\det DV)(0)=8$ 
is positive definite, it doesn't satisfy (P3).  This is because 
$(0, -M^{-1}G_{uu}(0))=(0,4)$ and $D(\det(D_{(x,y)}V(0)))=(0,-2)$.
Hence $D_{uu}(\det DV)(0)+D_y(\det(D_{(x,y)}V)))(0, -M^{-1}G_{uu}(0))^{\top}=0.$
\end{Exa}

\section{Preliminaries}

    \subsection{An index property for vector fields}
 Given a map $\phi: S^n\rightarrow S^n$, the degree of $\phi$ denoted by $\text{deg}\phi$ is the unique integer such that for any $x\in H_nS^n$, 
  $\phi_{\ast}(x)=\deg\phi \cdot x.$ 
  Here $\phi_{\ast}$ is the induced homomorphism in integral homology.  
  Suppose that $x_0$ is an isolated zero of the vector field $V$. Pick a closed disk $D$ centered at $x_0$, so that $x_0$ is the only zero of $V$ in $D$. Then we define the index of $x_0$ for $V$, $\text{ind}_{x_0}(V)$, to be the degree of the map
  $\phi:\partial D^n\rightarrow S^{n-1}, \phi(x)=\frac{V(z)}{|V(z)|}.$
  The following theorem is a well known result on the index of vector fields, see for example \cite{BG}.

  \begin{Thm}\label{PH}Consider a smooth vector field $\frac{dx}{dt}=V(x).$ If D is a disk containing finitely many zeros $x_1,\cdots ,x_k$ of $V$, then the degree of $\frac{V (x)}{\|V (x)\|}$ on $\partial D$ is equal to the sum of the indices of $V$ at the $x_i$. Moreover, when $x_i$ are all non-degenerate, then 
   $$\sum_{V(x)=0, x\in D}\text{sign}(\text{det}(J))(x)=Q,$$
 where $J$ is the Jacobian of $V$ at $x$ and  $Q$ is the degree of the map $\frac{V(x)}{\|V(x)\|}$ from the boundary of $D$ to the $n-1$ sphere.   
  \end{Thm}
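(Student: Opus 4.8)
The plan is to establish the two assertions in turn. For the first, I would isolate the zeros and reduce the additivity of degree to a homological statement about maps that extend over a bounding region; for the second, I would linearize at each non-degenerate zero and compute the degree of the resulting map on the sphere. The only genuinely topological input is a single lemma about degrees of boundary-restricted maps.

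First I would choose pairwise disjoint closed balls $B_1,\dots,B_k\subset \inter(D)$ with $x_i\in\inter(B_i)$ and with $V$ nonvanishing on $\overline{B_i}\setminus\{x_i\}$; this is possible since the $x_i$ are isolated. By definition $\text{ind}_{x_i}(V)$ is the degree of $V/\|V\|$ on $\partial B_i$. On the compact region $W=D\setminus\bigcup_i \inter(B_i)$ the vector field $V$ is nowhere zero, so $g=V/\|V\|\colon W\to S^{n-1}$ is a well-defined continuous map. The key observation is the homological lemma: if a map extends continuously over a compact oriented manifold $W$ with boundary, then the total degree of its restriction to $\partial W$ (summed over boundary components with the induced orientation) vanishes. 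This follows from the long exact sequence of the pair $(W,\partial W)$: the fundamental class satisfies $[\partial W]=\partial[W,\partial W]$, so $i_*[\partial W]=0$ in $H_{n-1}(W)$; since $(g|_{\partial W})_*=g_*\circ i_*$, we get $(g|_{\partial W})_*[\partial W]=0$, i.e.\ $\deg(g|_{\partial W})=0$. Because $\partial W=\partial D \sqcup \bigsqcup_i \partial B_i$ with the inner spheres carrying the orientation opposite to their standard orientation as $\partial B_i$, this identity rearranges to $\deg(V/\|V\|,\partial D)=\sum_i \text{ind}_{x_i}(V)$, which is the first claim.

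Next, for a non-degenerate zero $x_i$ I would show $\text{ind}_{x_i}(V)=\text{sign}(\det J(x_i))$, where $J=D_xV(x_i)$. Writing $V(x)=J(x-x_i)+R(x)$ with $R(x)=o(\|x-x_i\|)$ and using that $J$ is invertible, the straight-line homotopy $H_t(x)=J(x-x_i)+tR(x)$ is nonvanishing on a sufficiently small sphere $\partial B_i$, so on that sphere $V/\|V\|$ is homotopic to $x\mapsto J(x-x_i)/\|J(x-x_i)\|$. Hence $\text{ind}_{x_i}(V)$ equals the degree of the map $S^{n-1}\to S^{n-1}$ induced by the linear isomorphism $J$. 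Since $GL_n(\RR)$ has exactly two path components, distinguished by the sign of the determinant, $J$ is connected through invertible matrices either to the identity (degree $+1$) or to a single reflection (degree $-1$); therefore this degree is $\text{sign}(\det J(x_i))$. Summing over the zeros and invoking the first part gives $\sum_{V(x)=0,\,x\in D}\text{sign}(\det J)=Q$, as desired.

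I expect the main obstacle to be the careful bookkeeping of orientations in the homological lemma, in particular matching the induced boundary orientation of $W$ on the inner spheres $\partial B_i$ with the outward orientation used to define the local index, so that the signs combine into the clean additivity formula. The analytic step (the linear homotopy) and the degree computation for a linear isomorphism are routine once the additivity statement is in place, and the whole argument is the standard route to the Poincar\'e--Hopf index formula, consistent with the reference \cite{BG}.
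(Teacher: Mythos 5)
Your proposal is correct and is the standard proof of this classical fact: excision into small balls plus the vanishing of the boundary degree of a map that extends over a compact oriented manifold gives the additivity, and linearization plus the two path components of $GL_n(\RR)$ gives $\text{ind}_{x_i}(V)=\text{sign}(\det J(x_i))$ at non-degenerate zeros. The paper itself offers no proof of Theorem \ref{PH} --- it is quoted as a well-known result with a citation to \cite{BG} --- and your argument is essentially the one found in that reference, so there is nothing further to compare.
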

  
\subsection{Center manifold}
\begin{Thm}[Hirsch, Pugh and Shub, \cite{HPS};P16,\cite{Car} ]\label{linear1}Let $E$ be an open subset of $\mathbb{R}^n$ containing the origin and consider the non-linear system $\dot{x}=V(x)$, i.e.,
\begin{equation}\label{centerlinear000}\begin{cases}
\dot{x}=Cx+F(x,y)\\
\dot{y}=My+G(x,y)
\end{cases}
\end{equation}
where the square matrix $C$ has $c$-eigenvalues with zero real parts and the square matrix $M$ has eigenvalues with only non-zero real parts and 
$$F(0,0)=0, DF(0,0)=0; G(0,0)=0, DG(0,0)=0.$$
Then there exists a $\delta>0$ and a function $h\in C^r(B_{\delta}(0)), h(0)=0, Dh(0)=0$ that defines the local center manifold
$$W^c_{\text{loc}}(0)=\{(x,y)\in \mathbb{R}^c\times\mathbb{R}^s\times\mathbb{R}^u\vert y=h(x),  \text{for} \|x\|\leq \delta\}$$
and satisfies $
Dh(x)[Cx+F(x,h(x)]-Mh(x)-G(x,h(x))=0, |x|\leq \delta
$
and the flow on the center manifold $W^c(0)$ is defined by 
$\dot{u}=Cu+F(x,h(u)).$
\end{Thm}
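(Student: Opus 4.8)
The statement is the classical center manifold theorem, so the plan is to reconstruct the Lyapunov--Perron argument used in Carr's proof, supplemented by the Hirsch--Pugh--Shub fiber contraction for smoothness. The first step is localization. Because $DF(0,0)=0$ and $DG(0,0)=0$, given any small $\kappa>0$ there is a radius $\rho>0$ on which $F$ and $G$ have Lipschitz constant at most $\kappa$; multiplying each by a smooth cut-off supported in $B_{2\rho}(0)$ and equal to $1$ on $B_\rho(0)$ yields globally defined $\tilde F,\tilde G$ that agree with $F,G$ near the origin and are globally Lipschitz with constant $O(\kappa)$. Any invariant graph produced for the modified system coincides with a local center manifold of the original one inside $B_\rho(0)$, so it suffices to analyze the globally small-Lipschitz system and discard the cut-off at the very end.

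The second step sets up the fixed-point problem. Split the hyperbolic generator $M=M_s\oplus M_u$ by the sign of the real part of its eigenvalues, so that for some $\beta>0,\ K\ge1$ one has $\|e^{M_s t}\|\le Ke^{-\beta t}$ for $t\ge0$ and $\|e^{M_u t}\|\le Ke^{\beta t}$ for $t\le 0$, while the center block obeys $\|e^{Ct}\|\le K_\alpha e^{\alpha|t|}$ for every $\alpha>0$. For each $\xi\in\RR^c$ I would seek an entire orbit $(u(\cdot),y(\cdot))$ of the modified field with $u(0)=\xi$ and sub-exponential growth; by the variation-of-constants formula together with boundedness such an orbit is forced to solve
\begin{equation*}
\begin{aligned}
u(t) &= e^{Ct}\xi + \int_0^t e^{C(t-\tau)}\,\tilde F\big(u(\tau),y(\tau)\big)\,d\tau,\\
y(t) &= \int_{-\infty}^{t} e^{M_s(t-\tau)}\,\tilde G_s\,d\tau \;-\; \int_{t}^{+\infty} e^{M_u(t-\tau)}\,\tilde G_u\,d\tau,
\end{aligned}
\end{equation*}
where $\tilde G_s,\tilde G_u$ denote the stable and unstable components of $\tilde G(u(\tau),y(\tau))$.

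The third step is the contraction estimate. On the Banach space of continuous curves $\RR\to\RR^n$ with finite norm $\sup_t e^{-\eta|t|}\|\cdot\|$ for a rate $\eta\in(0,\beta)$, the right-hand side above defines an operator that is a contraction provided $\kappa$ is small relative to $\beta-\eta$ and $K$; this is exactly where the spectral gap between the zero real parts of $C$ and the nonzero real parts of $M$ is used. The unique fixed point depends on $\xi$, and I would set $h(\xi):=y(0)$. Elementary estimates on the fixed point give $h(0)=0$ and a global Lipschitz bound for $h$, and differentiating the defining relation at the origin yields $Dh(0)=0$.

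The last step, and the genuine obstacle, is the regularity $h\in C^r$. Existence and Lipschitz dependence are soft, but promoting $h$ to $C^r$ requires controlling derivatives of the fixed point in $\xi$. I would obtain this via the fiber contraction theorem: differentiating the Lyapunov--Perron operator in $\xi$ produces a bundle map covering the base contraction whose action on the fibers (the candidate derivatives) is again contracting, so the true derivative is the attracting fixed point and is continuous, giving $C^1$; iterating the same device on the higher jets reaches $C^r$. Once $h$ is $C^1$, flow-invariance of the graph over $B_\rho(0)$ and differentiation of $y(t)=h(u(t))$ along orbits give the invariance identity $Dh(x)\big[Cx+F(x,h(x))\big]=Mh(x)+G(x,h(x))$ and the reduced equation $\dot u=Cu+F(u,h(u))$. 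Two points deserve care: the smoothness bootstrap, which is the technical heart of the argument, and the fact that the center manifold---unlike the stable and unstable manifolds---is only locally and non-uniquely determined, so every conclusion is strictly local and relies on the cut-off being discarded at the end.
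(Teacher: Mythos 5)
The paper gives no proof of this statement: it is quoted as the classical center manifold theorem with citations to Hirsch--Pugh--Shub and Carr, and your reconstruction (cut-off to make the nonlinearities globally small-Lipschitz, Lyapunov--Perron fixed point on exponentially weighted spaces of entire orbits, fiber contraction for the $C^r$ regularity, then differentiation of the invariant graph to get the invariance identity and the reduced equation) is precisely the argument of those references, so it is correct and takes essentially the same route. The one technical point your sketch glosses over is that the smoothness bootstrap forces the weight $\eta$ to satisfy $r\eta<\beta$, which is exactly why the conclusion is only $C^r$ for each finite $r$ on a possibly shrinking neighborhood rather than $C^\infty$.
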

\begin{Thm}[P155, \cite{K}]\label{reduction}The flow given by the vector field (\ref{centerlinear000}) is locally topologically equivalent near the origin to the product system
\begin{equation}\label{centerlinear1}\begin{cases}
\dot{x}=Cx+F(x,h(x))\\
\dot{y}=My,
\end{cases}
\end{equation}
i.e., there exists a homeomorphism $h$ mapping orbits of the first system onto orbits of the second system, preserving the direction of time. 
\end{Thm}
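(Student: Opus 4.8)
The statement is Shoshitaishvili's reduction principle, and the plan is to decouple the center dynamics from the hyperbolic linear dynamics by means of the invariant foliations furnished by normal hyperbolicity, and then to match the hyperbolic part through a Hartman--Grobman conjugacy. First I would straighten the center manifold: using the function $h$ from Theorem \ref{linear1}, set $\tilde y = y - h(x)$. In the coordinates $(x,\tilde y)$ the local center manifold becomes $\{\tilde y = 0\}$, it is invariant, and the restricted flow is exactly $\dot x = Cx + F(x,h(x)) =: f(x)$; the system itself takes the form $\dot x = f(x) + R(x,\tilde y)$, $\dot{\tilde y} = M\tilde y + \tilde G(x,\tilde y)$ with $R,\tilde G$ vanishing to first order and $R(x,0)=0$. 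Since $(x,y)\mapsto (x, y-h(x))$ is a diffeomorphism, hence a topological equivalence, it suffices to produce a local topological equivalence between this straightened system and the product $\dot x = f(x)$, $\dot{\tilde y} = M\tilde y$, which is precisely the target system of the theorem written in the new fiber coordinate.

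Next I would extract the geometric skeleton. Because the center manifold carries only zero Lyapunov exponents while $M$ has no eigenvalue on the imaginary axis, $\{\tilde y = 0\}$ is normally hyperbolic. Splitting $M = \mathrm{diag}(M_s, M_u)$ into its stable and unstable blocks, the Hirsch--Pugh--Shub theory (\cite{HPS}) supplies invariant center-stable and center-unstable manifolds $W^{cs}, W^{cu}$ tangent to $E^c\oplus E^s$ and $E^c\oplus E^u$, together with continuous invariant strong-stable and strong-unstable foliations $\mathcal{F}^{ss}$ of $W^{cs}$ and $\mathcal{F}^{uu}$ of $W^{cu}$, whose leaves through a point $p\in W^c$ are the strong stable and strong unstable manifolds of $p$. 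These foliations are flow-invariant, and each point of a neighborhood of the origin admits a unique representation by a base point on $W^c$, obtained by sliding along a $\mathcal{F}^{uu}$-leaf onto $W^{cs}$ and then along a $\mathcal{F}^{ss}$-leaf onto $W^c$, together with its hyperbolic fiber coordinate.

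I would then build the equivalence. Let $\pi$ be the projection onto $W^c$ along the two foliations just described; invariance of the foliations forces $\pi$ to carry orbits of the full system to orbits of the center flow $f$, so on the base it agrees with the center factor of the product system, where the base identification is the identity on $W^c = \{\tilde y = 0\}$ since both systems restrict to $f$ there. Transverse to $W^c$, I would invoke the Hartman--Grobman theorem to conjugate the contraction and expansion along the hyperbolic fibers to the linear flow $\tilde y \mapsto e^{tM}\tilde y$, performing this fiberwise over the base point and assembling the fiber conjugacies into one map. The resulting homeomorphism $H$, base point transported by the identity and fiber coordinate transported by the Hartman--Grobman map, is the sought local topological equivalence, taking orbits to orbits and preserving the direction of time.

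The main obstacle is regularity and gluing. The invariant foliations produced by normal hyperbolicity are in general only continuous, and only H\"older in the transverse direction, so I must verify that the projection $\pi$ and, more delicately, the fiberwise Hartman--Grobman conjugacy depend continuously on the base point, so that the assembled $H$ is a genuine homeomorphism and not merely a fiberwise bijection. Securing this joint continuity --- equivalently, showing that the leaves vary continuously and that the conjugacies can be selected continuously in the leaf --- is where the real work lies; once it is in hand, the algebraic matching of the two product structures and the verification that orbits and time orientation are preserved are routine.
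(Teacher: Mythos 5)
The paper offers no proof of this statement: it is quoted verbatim from Kuznetsov (p.\ 155 of \cite{K}) and is the classical Shoshitaishvili reduction principle, so there is no internal argument to compare yours against. Your outline is the standard route to that theorem --- straighten the center manifold via $\tilde y = y - h(x)$, use normal hyperbolicity of $\{\tilde y = 0\}$ to obtain the invariant center-(un)stable manifolds and the strong stable/unstable foliations, project to the base along the foliations, and linearize transversally --- and you correctly identify that the substance of the proof is the joint continuity of the assembled map.

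One step as written does not quite parse and deserves repair. The individual leaves of $\mathcal{F}^{ss}$ and $\mathcal{F}^{uu}$ are not flow-invariant: the flow carries the leaf through $p\in W^c$ to the leaf through $\phi^t(p)$, so the transverse dynamics ``in a fiber'' is nonautonomous and the Hartman--Grobman theorem cannot be invoked fiberwise over a fixed base point. What is actually needed is a leaf-preserving conjugacy between two contracting (resp.\ expanding) bundle maps over the same center flow --- the nonlinear strong-stable dynamics and the linear one generated by $M_s$ (resp.\ $M_u$) --- which is usually built by a fundamental-domain argument or by the graph-transform/fixed-point construction of the conjugacy on the whole foliated neighborhood at once; this is also exactly where the continuity in the base point you worry about gets established. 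With that substitution your plan is the correct one, but as stated the fiberwise Hartman--Grobman step is the genuine gap.
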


\section{The index of the fixed points}

As a preparation for the proof of Theorem \ref{BB}, in this section we present a product property for the index of the fixed points.  
Let's consider the vector field
$\dot{x}=V(x),$
with $V(x_0)=0$ and 
 the eigenvalues of $DV(x_0)$ have non-zero real part except for one eigenvalue. Here we  assume $x_0$ is an isolated zero point for $V$. 
 Let $U\subset \mathbb{R}^n$ be a small neighborhood of $x_0$ such that $V(x)\neq 0$. Let $D^n$ be a homeomorphic image of $n$-ball with the natural orientation and $x_0\in D^n\subset \overline{D^n}\subset U$.   According to the definition of the index at $x_0$ of $V$, the index of the zero $x_0$ for $V$ is given by the degree of the map $\xi_{V}(x)=\frac{V(x)}{\|V(x)\|}, x\in \partial D^n$
  where $\partial D^n$ is a ball around $x_0$.  
  
The following lemma builds a relation between the index of the fixed points $x_0$ for the vector field $V$ and the index of  $x_0$ as a zero for the map $V(x)$.   
\begin{Lem}\label{algebraicandindex}The index of the zero point $x_0$ of $V$ equals the index of $x_0$ as a fixed point of the locally defined flow $\phi_t$ for $t>0$ sufficiently small. 
\end{Lem}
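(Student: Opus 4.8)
The plan is to compare the two integer-valued quantities directly through their definitions as degrees of Gauss maps on the small sphere $\partial D^n$, and to exhibit an explicit homotopy between them. Recall that $\ind_{x_0}(V)$ is the degree of $\xi_V(x)=V(x)/\|V(x)\|$ on $\partial D^n$, while the index of $x_0$ as a fixed point of $\phi_t$ is, by definition, the degree of $\psi_t(x)=(\phi_t(x)-x)/\|\phi_t(x)-x\|$ on $\partial D^n$; I use the convention $\phi_t-\id$ (rather than $\id-\phi_t$), which is the one whose sign matches the vector-field index. The whole point is that, to first order in $t$, the displacement $\phi_t(x)-x$ is a \emph{positive} multiple of $V(x)$, so the two Gauss maps should be homotopic once $t>0$ is small.

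Concretely, I would first record the elementary estimate coming from $\phi_t(x)-x=\int_0^t V(\phi_s(x))\,ds$ together with the $C^1$ bound on $V$ on the compact set $\overline{D^n}$: writing $L=\sup_{\overline{D^n}}\|DV\|$ and $K=\sup_{\overline{D^n}}\|V\|$, one gets the uniform bound $\|\phi_t(x)-x-tV(x)\|\le \tfrac12 LK\,t^2$ for all $x\in\overline{D^n}$. Next, since $x_0$ is the only zero of $V$ in $\overline{D^n}$ and $x_0\notin\partial D^n$, there is $m>0$ with $\|V(x)\|\ge m$ on $\partial D^n$. Hence for $0<t<2m/(LK)$ the remainder $R(x,t):=\phi_t(x)-x-tV(x)$ satisfies $\|R(x,t)\|<\|tV(x)\|$ on $\partial D^n$, so the straight-line homotopy $H_\theta(x)=tV(x)+\theta R(x,t)$, $\theta\in[0,1]$, never vanishes there. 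In particular $\phi_t$ has no fixed point on $\partial D^n$, and normalizing $H_\theta$ yields a homotopy of maps $\partial D^n\to S^{n-1}$ from $\xi_V$ (at $\theta=0$, using $t>0$) to $\psi_t$ (at $\theta=1$). Therefore $\deg\xi_V=\deg\psi_t$.

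This identifies the degree of $\psi_t$ on $\partial D^n$ with $\ind_{x_0}(V)$. The remaining point, which I expect to be the only real obstacle, is to check that this boundary degree is genuinely the fixed-point index \emph{at} $x_0$, i.e. that $x_0$ is an isolated fixed point of $\phi_t$ for all small $t$; otherwise $\deg\psi_t$ would record a sum of contributions from several fixed points inside $D^n$. Any fixed point of $\phi_t$ that is not an equilibrium lies on a periodic orbit whose minimal period $\tau$ divides $t$, hence $\tau\le t$, so it suffices to rule out periodic orbits of arbitrarily small period accumulating at $x_0$. This is exactly where the spectral hypothesis enters: since all eigenvalues of $DV(x_0)$ have nonzero real part except the simple zero eigenvalue, Theorem \ref{linear1} supplies a one-dimensional normally hyperbolic center manifold on which the reduced flow is one-dimensional, hence gradient-like and free of periodic orbits; combined with hyperbolicity in the transverse directions, this precludes periodic orbits in a neighborhood of $x_0$. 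Consequently, after shrinking $D^n$ and $t$ if necessary, $x_0$ is the unique fixed point of $\phi_t$ in $D^n$, so $\deg\psi_t$ equals the fixed-point index at $x_0$, and the lemma follows.
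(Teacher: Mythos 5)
Your proposal is correct and follows essentially the same route as the paper: a straight-line homotopy on $\partial D^n$ between the normalized vector field $V/\|V\|$ and the normalized displacement of the time-$t$ map, justified by the first-order estimate $\phi_t(x)-x\approx tV(x)$, together with the observation that the spectral hypothesis rules out periodic orbits of small period so that $x_0$ is the only fixed point of $\phi_t$ in $D^n$. You are in fact somewhat more careful than the paper on two points it glosses over --- the explicit remainder bound $\|\phi_t(x)-x-tV(x)\|\le\tfrac12 LKt^2$ and the sign convention $\phi_t-\id$ versus $\id-\phi_t$ (the paper's displayed map $\frac{x-\phi^t(x)}{\|x-\phi^t(x)\|}$ is nearly antipodal to $V/\|V\|$, so your convention is the one that makes the claimed homotopy and the stated equality of indices literally correct).
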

\begin{proof}  Let $U\subset \mathbb{R}^n$ be a small neighborhood of $x_0$ such that $V(x)\neq 0$ and $\phi^t(x)\neq x$ for all $x\in U\backslash \{x_0\}$. Let $D^n$ be a homeomorphic image of $n$-ball with the natural orientation and $x_0\in D^n\subset \overline{D^n}\subset U$.   According to the definition of the index at $x_0$ of $V$, it suffices to prove 
$\xi_{V}(x)=\frac{V(x)}{\|V(x)\|}, x\in \partial D^n$
and   $\phi_{\phi^t}(x)=\frac{x-\phi^t(x)}{\|x-\phi^t(x)\|}, x\in \partial D^n$
have the same degree.  Denote by $$\delta:=\min\{\inf\{\|V(x)\| \vert x\in \partial D^n\}, \inf\{\|x-\phi^t(x)\| \vert x\in \partial D^n\}\}.$$
Since the eigenvalues of $DV(x_0)$ have non-zero real part except for one eigenvalue, there is no small periodic orbits in $U$. Hence $\delta>0$. As long as $t$ is sufficiently small, we have
$$\|V(x)-x-\phi^t(x)\|=\|V(x)-x-\phi^t(x)\|\leq  \|V(x)-tV(x)\|\leq (1-t)\delta$$
on $\partial D^n$, since $\phi^t$ is differentiable at $x_0$ and $V(x)$ is its differential. Hence $\xi_{V}$ and $\phi_{\phi^t}$ are never antipodal, hence straight-line homotopic via
$\frac{t\xi_{V}+(1-t)\phi_{\phi^t}}{\|t\xi_{V}+(1-t)\phi_{\phi^t}\|}.$
Thus $\deg\xi_{V}=\deg\phi_{\phi^t}.$ \end{proof}

We note here that the vector fields $V$ and $A^{-1}V(A)$ have the same index at the fixed point $x_0$ and $A^{-1}x_0$ respectively, where $A$ is a linear isomorphism. 
This follows immediately from the independence of the definition of index on the coordinates. Please refer to Chapter 7 in \cite{BG} for a proof. 
Under suitable coordinates, we assume the vector field $V$ can be written as
$\label{centerlinear}\begin{cases}
\dot{x}=Cx+F(x,y)\\
\dot{y}=My+G(x,y)
\end{cases}$
where the square matrix $C$ has $c$-eigenvalues with zero real parts and the square matrix $M$ has eigenvalues with only non-zero real parts and 
$F(0,0)=0, DF(0,0)=0; G(0,0)=0, DG(0,0)=0.$
By Theorem \ref{linear1}, there exists a $\delta>0$ and a function $h\in C^r(B_{\delta}(0))$, $h(0)=0, Dh(0)=0$ such that the vector field on the center manifold is defined by 
$$\dot{u}=V^c:=Cu+F(x,h(u)).$$

\begin{Lem}\label{indexproduct}The product property $\text{ind}_{V}(0)=\text{ind}_{V^c}(0)\times (-1)^{\sharp\{i\vert \lambda_i>0\}}$ holds, where $\lambda_i$ are the non-zero eigenvalues for $DV$.
\end{Lem}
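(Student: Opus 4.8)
The plan is to reduce the computation of $\text{ind}_V(0)$ to a product of two independent index computations, one along the center directions and one along the hyperbolic directions, by using Lemma \ref{algebraicandindex} together with the topological reduction in Theorem \ref{reduction}. First I would invoke Lemma \ref{algebraicandindex} to replace the index of $0$ as a zero of the vector field $V$ by the index of $0$ as a fixed point of the time-$t$ flow $\phi^t_V$ for small $t>0$; this is the bridge that lets us work with the flow and hence exploit topological equivalence. By Theorem \ref{reduction}, the flow of $V$ is locally topologically equivalent to the product flow given by \eqref{centerlinear1}, namely $\dot u = Cu + F(u,h(u))$ on the center factor and $\dot y = My$ on the hyperbolic factor. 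Since the index of an isolated fixed point is a topological invariant (it depends only on the degree of the displacement map on a small sphere, and a local topological equivalence carries one displacement field to a homotopic one), the fixed-point index of $\phi^t_V$ at $0$ equals the fixed-point index of the product flow at $0$.

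Next I would use the multiplicativity of the fixed-point index under products. For a product flow $\Phi^t = \psi^t \times \eta^t$ on $\RR^c \times \RR^{n-c}$ with an isolated fixed point $(0,0)$, the local displacement map splits as $(\id - \psi^t) \times (\id - \eta^t)$, and the degree of a product map on the product sphere is the product of the degrees (this is the standard product formula for the Brouwer/fixed-point index). Hence
\[
\text{ind}_0(\Phi^t) = \text{ind}_0(\psi^t)\cdot \text{ind}_0(\eta^t).
\]
For the center factor, applying Lemma \ref{algebraicandindex} in reverse identifies $\text{ind}_0(\psi^t)$ with $\text{ind}_{V^c}(0)$, the index of $0$ as a zero of the reduced vector field $V^c = Cu + F(u,h(u))$.

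It then remains to compute the hyperbolic factor $\text{ind}_0(\eta^t)$, where $\eta^t = e^{tM}$ is a linear hyperbolic flow. Here the fixed point is nondegenerate, so by the linear part of Lemma \ref{algebraicandindex} (or directly by Theorem \ref{PH}) the index equals $\operatorname{sign}\det(-M) = (-1)^{\sharp\{i \mid \lambda_i > 0\}}$, since each eigenvalue $\lambda_i$ of $M$ with positive real part contributes a factor $-1$ to the sign of $\det(-M)$ and each with negative real part contributes $+1$. (Concretely, $\text{ind}_0(e^{tM})$ is the degree of $x \mapsto x - e^{tM}x$, whose linearization $I - e^{tM}$ has determinant of sign $(-1)^{\sharp\{i\mid \lambda_i>0\}}$ for small $t>0$.) Combining the three pieces yields
\[
\text{ind}_V(0) = \text{ind}_{V^c}(0)\cdot(-1)^{\sharp\{i\mid \lambda_i>0\}},
\]
as claimed.

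I expect the main obstacle to be the justification that the fixed-point index is genuinely preserved under the \emph{merely topological} equivalence of Theorem \ref{reduction}: a topological conjugacy preserves the displacement map up to a homeomorphism of the domain and hence the degree, but an equivalence only preserves orbits and the direction of time, not the time parametrization, so one must argue that the displacement field $x \mapsto x - \phi^t(x)$ for the true flow and the corresponding field for the product flow are non-antipodal (hence straight-line homotopic) on a small sphere, in the spirit of the homotopy already used in the proof of Lemma \ref{algebraicandindex}. Care is also needed to ensure that $0$ remains the unique fixed point of each factor flow in the relevant neighborhood, so that the product displacement map is nonvanishing on the product sphere and the degree product formula applies; this uses that $C$ contributes the only center direction while $M$ is hyperbolic, so no spurious small periodic orbits or extra fixed points appear.
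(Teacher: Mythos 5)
Your proposal follows essentially the same route as the paper: Lemma \ref{algebraicandindex} to pass between the vector-field index and the fixed-point index of the time-$t$ flow, Theorem \ref{reduction} to replace the flow by the product system, and multiplicativity of the degree for product maps. You are somewhat more explicit than the paper (in computing the hyperbolic factor as $\operatorname{sign}\det(-M)$ and in flagging that a topological equivalence, rather than conjugacy, requires a homotopy argument for the displacement maps), but the underlying argument is the same.
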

\begin{proof}

On the one hand, by Theorem \ref{reduction}, the index of $(0,0)$ for the flow $\phi^t_V$ given by $V$ is the same as the index of $(0,0)$ for the flow $\phi^t_{V_1}$. On the other hand, by Lemma \ref{algebraicandindex}, we obtain $\phi^t_{V}$ and $\phi^t_{V_1}$ have the same index at $(0,0)$. Therefore, the two vector fields  
$\label{centerlinear}
V=\begin{cases}
\dot{x}=Cx+F(x,y)\\
\dot{y}=My+G(x,y)
\end{cases}$
 and  $V_1=(Cx+F(x,h(x)), My)$ have the same index for the zero $(0,0)$.
Finally, by the fact that the index of a product map is the product of the index along each direction, we finish the proof. \end{proof}

\section{The observations on one-dimensional case}
Before we delve into the proof of Theorem \ref{BB}, let's turn our attention to the one-dimensional case first. Theorem \ref{centerlinear} and Lemma  \ref{indexproduct} show that the one-dimensional center direction can reflect 
the bifurcation properties and the index around the fixed point of an arbitrary-dimensional vector field. Following this idea, a classical argument will be the method of center reduction. By doing center reduction, one can change the high-dimensional problems to be one-dimensional problems.  In this section, we study some observations for the one-dimensional case.

 \begin{Lem}\label{existence}Consider the family of smooth functions $V(u,\eps),u\in\mathbb{R}^1,\eps\in\mathbb{R}^1$. Let $u=0$ be an isolated non-hyperbolic zero with non-zero index for $V(u,0).$  Assume
 $$\frac{\partial V}{\partial\eps}(0,0)=0 \text{ and }\frac{\partial^2 V}{\partial u\partial\eps}(0,0)\neq 0.$$
Then for any $\eps$ sufficiently close to zero, we have $u_{\eps}$ as zeros for $V(\cdot,\eps)$ inside $B_{\eps^{1-\delta}}(0)$
, for any sufficiently small number $\delta>0$. 
Moreover, the index of $u_{\eps}$ for $V(\cdot,\eps)$ has the different sign for $\eps>0$ and $\eps<0$. 
\end{Lem}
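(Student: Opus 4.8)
The plan is to reduce everything to the sign behaviour of $V$ and of $V_u:=\partial V/\partial u$ on the shrinking interval $B_{\eps^{1-\delta}}(0)=(-|\eps|^{1-\delta},\,|\eps|^{1-\delta})$, using only the Taylor expansion of $V$ at $(0,0)$. First I would record the consequences of the hypotheses at the base point. Non-hyperbolicity gives $V_u(0,0)=0$, and the assumption that $u=0$ is a zero of \emph{non-zero} index for $V(\cdot,0)$ forces $V_{uu}(0,0)=0$: in dimension one a non-zero index means $V(\cdot,0)$ changes sign at $0$, whereas if $V_{uu}(0,0)\neq 0$ then $V(u,0)=\tfrac12 V_{uu}(0,0)u^2+o(u^2)$ has one sign on both sides of $0$, giving index $0$. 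Set $a:=\frac{\partial^2 V}{\partial u\partial\eps}(0,0)\neq 0$.

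Next I would show that $V(\cdot,\eps)$ is strictly monotone on the whole interval $B_{\eps^{1-\delta}}(0)$. Expanding $V_u$ at $(0,0)$ and using $V_u(0,0)=V_{uu}(0,0)=0$, $V_{u\eps}(0,0)=a$, one gets $V_u(u,\eps)=a\eps+R(u,\eps)$ with $R=O(u^2)+O(u\eps)+O(\eps^2)$. On $B_{\eps^{1-\delta}}(0)$ we have $|u|\le|\eps|^{1-\delta}$, so the three pieces of $R$ are of order $|\eps|^{2-2\delta}$, $|\eps|^{2-\delta}$, $|\eps|^{2}$, each of strictly higher order than $|a\eps|$ once $\delta<1/2$. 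Hence for $|\eps|$ small, $V_u(\cdot,\eps)$ keeps the constant sign $\operatorname{sign}(a\eps)$ throughout the interval, so $V(\cdot,\eps)$ is strictly monotone there.

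Then I would pin down existence and location of the zero. Writing $V(u,\eps)=V(0,\eps)+\int_0^u V_u(s,\eps)\,ds$ and using $V(0,0)=0$ together with the hypothesis $V_\eps(0,0)=0$ to get $V(0,\eps)=O(\eps^2)$, the integral equals $a\eps u\,(1+o(1))$, so $V(\pm|\eps|^{1-\delta},\eps)=\pm a\,\eps\,|\eps|^{1-\delta}(1+o(1))+O(\eps^2)$, which is dominated by the $\pm a\eps|\eps|^{1-\delta}$ term of order $|\eps|^{2-\delta}$. Thus $V$ takes opposite signs at the two endpoints, and combined with the strict monotonicity of Step 2 this produces a unique zero $u_\eps\in B_{\eps^{1-\delta}}(0)$. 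I would stress here that the hypothesis $V_\eps(0,0)=0$ is exactly what kills the order-$\eps$ contribution $V(0,\eps)$; were it nonzero, that term would dominate and push the continued zero out of the interval.

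Finally, the index. Since $u_\eps$ is nondegenerate, its one-dimensional index equals $\operatorname{sign} V_u(u_\eps,\eps)=\operatorname{sign}(a\eps)$ by Step 2, i.e. $\operatorname{sign}(a)$ for $\eps>0$ and $-\operatorname{sign}(a)$ for $\eps<0$; equivalently, by Theorem \ref{PH} the index in the ball is the degree of $V/\|V\|$ on $\partial B_{\eps^{1-\delta}}(0)$, read off from the endpoint signs, which swap as $\eps$ crosses $0$. Either way the index changes sign across $\eps=0$. The main obstacle is the uniform scale-matching in Steps 2--3: verifying that the term $a\eps$ dominates every higher-order term \emph{uniformly} on the $\eps$-dependent interval (this is precisely what forces $\delta<1/2$), and recognizing that $V_\eps(0,0)=0$ is exactly the hypothesis confining the continued zero to $B_{\eps^{1-\delta}}(0)$.
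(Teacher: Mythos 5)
Your proof is correct, but it takes a genuinely different route from the paper's. Both arguments begin identically (extracting $V_u(0,0)=V_{uu}(0,0)=0$ from non-hyperbolicity and the non-zero index, the latter via the sign-change characterization of index in dimension one), and both hinge on the same key estimate, namely that on the shrinking interval $|u|\le|\eps|^{1-\delta}$ with $\delta<1/2$ one has $V_u(u,\eps)=a\eps(1+o(1))$ uniformly, where $a=\frac{\partial^2 V}{\partial u\partial\eps}(0,0)$. From there the paper runs Newton's method: it forms $F_\eps(u)=u-V_\eps(u)/V_\eps'(u)$, uses the lower bound $|V_\eps'(u)|\gtrsim|\eps|$ together with an upper bound $|V_\eps(u)V_\eps''(u)|\lesssim|\eps|^{3-2\delta}$ to get $|F_\eps'|\lesssim|\eps|^{1-2\delta}$, checks that $F_\eps$ maps $B_{\eps^{1-\delta}}(0)$ into itself, and invokes the contraction mapping principle to produce the unique zero. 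You instead observe that the same lower bound on $V_u$ makes $V(\cdot,\eps)$ strictly monotone on the interval, and then use $V(0,\eps)=O(\eps^2)$ (this is where $V_\eps(0,0)=0$ enters, exactly as you emphasize) to show the endpoint values $\pm a\eps|\eps|^{1-\delta}(1+o(1))+O(\eps^2)$ have opposite signs, so the intermediate value theorem gives existence and monotonicity gives uniqueness. Your version is more elementary and arguably cleaner: it avoids dividing by $V_\eps'$ and the attendant bookkeeping of the quotient $V_\eps V_\eps''/(V_\eps')^2$, and it makes transparent why $\delta<1/2$ is the right threshold and why $V_\eps(0,0)=0$ is indispensable. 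The contraction-mapping formulation would be the one to reach for in a Banach-space generalization, but for this one-dimensional lemma the two approaches are equally rigorous and your identification of the final index as $\operatorname{sign}(a\eps)$, flipping across $\eps=0$, matches the paper's conclusion exactly.
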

\begin{proof}We shall use Newton's method to find the zero point $u_{\eps}$. By the assumption that $u=0$ is a non-hyperbolic zero for $V(u,0)$, we get $V(0,0)=0,\frac{\partial V}{\partial u}(0,0)=0.$ Since the index of $u=0$ is non-zero, we know the first $k$ such that $\frac{\partial^kV}{\partial u^k}(0,0)\neq0$ should be odd. Hence $\frac{\partial^2V}{\partial u^2}(0,0)=0.$
Fix an arbitrary small number $\eps$. Denote by $V_{\eps}(u):=V(u,\eps)$. Consider the following sequence of iterations given in Newton's argument:
$u_n=u_{n-1}-\frac{V_{\eps}(u_{n-1})}{V'_{\eps}(u_{n-1})}.$
Then the fixed point of the following map will be the zero points for $V_{\eps}$:
$F_{\eps}(u)=u-\frac{V_{\eps}(u)}{V_{\eps}'(u)}.$
We claim that $F_{\eps}$ is a contracting map on the disc $B_{\eps^{1-\delta}}(0)$. Actually, we have 
\begin{eqnarray*}
F'_{\eps}(u)=1-\frac{V'_{\eps}(u)^2-V_{\eps}(u)V''_{\eps}(u)}{V_{\eps}'(u)^2}
=\frac{V_{\eps}(u)V''_{\eps}(u)}{(V'_{\eps}(u))^2}.\end{eqnarray*} 
Denote by $\frac{\partial^2 V}{\partial u\partial\eps}(0,0)=c\neq0.$
The denominator $V'_{\eps}(u)$ satisfies 
\begin{eqnarray*}\vert V'_{\eps}(u)\vert &=&\vert \frac{\partial V}{\partial u}(u,\eps)-\frac{\partial V}{\partial u}(0,\eps)+\frac{\partial V}{\partial u}(0,\eps)-\frac{\partial V}{\partial u}(0,0)\vert
\\ &\geq&-\vert \frac{\partial V}{\partial u}(u,\eps)-\frac{\partial V}{\partial u}(0,\eps)\vert+\vert\frac{\partial V}{\partial u}(0,\eps)-\frac{\partial V}{\partial u}(0,0)\vert\\
&\geq&\frac{\partial^2V}{\partial u\partial \eps}(0,\tilde{\eps})\eps-\frac{\partial^2V}{\partial u^2}(\tilde{u},\eps)u
\geq C_0(\vert c+\tilde{\eps}\vert)\eps-\vert\tilde{u}u\vert
\geq Cc\eps
\end{eqnarray*} on the ball  $B_{\eps^{1-\delta}}(0)$, where $C,C_0$ are  constant numbers (in the following argument we shall use $C$ for all constant numbers). 
Similarly,  the numerator satisfies
$V_{\eps}(u)V''_{\eps}(u)
\leq Cc\eps^{3-2\delta}.$
Therefore we have
\begin{eqnarray*}
F'_{\eps}(u)=1-\frac{V'_{\eps}(u)^2-V_{\eps}(u)V''_{\eps}(u)}{V_{\eps}'(u)^2}
=\frac{V_{\eps}(u)V''_{\eps}(u)}{(V'_{\eps}(u))^2}
\leq\frac{Cc\eps^{3-2\delta}}{Cc^2\eps^2}
\leq C\eps^{1-2\delta},
\end{eqnarray*}
where $C$ is a constant number. Hence we finish the proof of the claim. On the other hand, since
$
|F_{\eps}(u)|\leq C\eps^{2-3\delta}\leq \eps^{1-\delta}, $  we have $F_{\eps}(B_{\eps^{1-\delta}}(0))\subset B_{\eps^{1-\delta}}(0)$ for small $\delta>0$. It follows that there is one and only one fixed point inside $B_{\eps^{1-\delta}}(0)$.   At $u_{\eps}$,  we have $\frac{\partial V(u_{\eps},\eps)}{\partial u}$ has the same sign as $c\eps$. Since there is a change of sign for $c\eps$ with the variation of $\eps$ from negative to positive, there is a change of sign for $\frac{\partial V(u_{\eps},\eps)}{\partial u}$ with the variation of $\eps$ from negative to positive.

\end{proof}

The following lemma shows that the vector field 
has one and only one equilibrium at one side of the bifurcation time.
\begin{Lem}[Uniqueness]\label{uniqueness}Consider the family of one dimensional 
vector filed:
$\dot{u}=V(u,\eps), u\in\mathbb{R}^1,\eps\in\mathbb{R}^1.$ Assume $u=0$ to be an isolated non-hyperbolic zero with non-zero index for $V(u,0)$.  Assume 
$$\frac{\partial V}{\partial\eps}(0,0)=0 \text{ and }\frac{\partial^2V}{\partial\eps\partial u}(0,0)\neq 0.$$
Then there exist a neighborhood $U\subset\mathbb{R}^1$ of $x=0$ and a small number $\eps_0>0$ such that there is one and only one zero $u_{\eps}\in U$ for any $\eps\in[0,\eps_0]$ or any $\eps\in [-\eps_0,0]$. 
\end{Lem}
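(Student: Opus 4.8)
The plan is to localize at the scale $|\eps|^{1-\delta}$ produced by Lemma~\ref{existence} and to control the sign of $V(\cdot,\eps)$ away from that scale. First I record the picture at $\eps=0$: non-hyperbolicity gives $V(0,0)=0$ and $\partial_u V(0,0)=0$, and since the isolated zero $u=0$ has non-zero index, $V(\cdot,0)$ changes sign there. By isolatedness $V(u,0)\neq0$ for $0<|u|<\rho$ (some $\rho>0$), so $V(\cdot,0)$ has one constant sign on $(-\rho,0)$ and the opposite constant sign on $(0,\rho)$. Write $c:=\partial^2_{u\eps}V(0,0)\neq0$. I will show that, for one choice of the sign of $\eps$, $V(\cdot,\eps)$ has exactly one zero in $U:=(-\rho,\rho)$, by splitting $U$ into an \emph{inner} part $|u|<|\eps|^{1-\delta}$, an \emph{intermediate} part $|\eps|^{1-\delta}\le|u|\le\eta$, and a \emph{far} part $\eta\le|u|\le\rho$, for a fixed small $\eta>0$ and small $\delta>0$.

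On the inner part, Lemma~\ref{existence} applies directly: the Newton map is a contraction of $B_{|\eps|^{1-\delta}}(0)$ into itself, so $V(\cdot,\eps)$ has exactly one zero $u_\eps$ there. On the far part I argue by compactness: $|V(\cdot,0)|$ attains a positive minimum $m$ on the compact set $\{\eta\le|u|\le\rho\}$, while $|V(u,\eps)-V(u,0)|\le C|\eps|$ uniformly, so for $|\eps|<m/C$ there is no zero of $V(\cdot,\eps)$ on the far part, for either sign of $\eps$.

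The intermediate part is the heart of the matter, since there $V(\cdot,0)$ degenerates like $u^{k}$ with $k\ge3$ and one must rule out spurious zeros created by the perturbation. I write $V(u,\eps)-V(u,0)=\int_0^{\eps}\partial_\eps V(u,s)\,ds$ and expand $\partial_\eps V(u,s)=cu+R(u,s)$, where $\partial_\eps V(0,0)=0$ and $\partial^2_{u\eps}V(0,0)=c$ give $|R(u,s)|\le C(u^2+|s|)$. On the intermediate part $|s|\le|\eps|\le|\eps|^{\delta}|u|$ (using $|u|\ge|\eps|^{1-\delta}$) and $u^2\le\eta|u|$, so $|R(u,s)|\le C(\eta+|\eps|^{\delta})|u|<\tfrac12|c|\,|u|$ once $\eta$ and $\eps_0$ are small; hence $\partial_\eps V(u,s)$ keeps the sign of $cu$ for every $s$ between $0$ and $\eps$. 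Choosing the sign of $\eps$ so that $\operatorname{sign}(\eps cu)$ agrees with $\operatorname{sign}V(u,0)$ on each side of $0$ — concretely $\eps c<0$ when $V(\cdot,0)>0$ on $(-\rho,0)$, and $\eps c>0$ otherwise — the increment $\int_0^\eps \partial_\eps V(u,s)\,ds$ has the same sign as $V(u,0)$, so $V(u,\eps)$ is sign-definite and nonzero on the intermediate part.

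Combining the three estimates, for the selected sign of $\eps$ and $|\eps|\le\eps_0$ the function $V(\cdot,\eps)$ has no zero on the intermediate or far parts and exactly one zero on the inner part; thus exactly one zero $u_\eps\in U$ (at $\eps=0$ this zero is $u=0$), which gives uniqueness on $[0,\eps_0]$ or on $[-\eps_0,0]$ according to the sign selected. I expect the main obstacle to be precisely the intermediate-region estimate and its matching with Lemma~\ref{existence}: one must cut off at the same scale $|\eps|^{1-\delta}$ so that the linear term $cu$ in $\partial_\eps V$ dominates all error terms uniformly there, whereas on the opposite side of $\eps=0$ this domination fails and new zeros may genuinely appear (as in the normal form $\dot u=\eps u-u^3$ and in the $1\to k$ examples of Section~\ref{failing}), which is exactly why only one-sided uniqueness can be claimed.
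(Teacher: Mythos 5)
Your proof is correct, but it follows a genuinely different route from the paper's. The paper applies the implicit function theorem to $\frac{\partial V}{\partial u}(u,\eps)=0$ (using $\frac{\partial^2 V}{\partial u\partial\eps}(0,0)\neq 0$) to realize the fold locus as a graph $\eps=\eps(u)$, argues via the sign change of $V(\cdot,0)$ and the mean value theorem that this graph stays entirely in $\{\eps>0\}$ or entirely in $\{\eps<0\}$, and then invokes the index count of Theorem \ref{PH}: on the side of $\eps_0$ not met by the fold curve every zero lies in a single component of $\{\frac{\partial V}{\partial u}\neq 0\}$, hence all zeros carry the same index $\pm1$, and since the total index is $\pm1$ there can be only one. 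You instead argue by a three-scale decomposition ($|u|<|\eps|^{1-\delta}$, $|\eps|^{1-\delta}\le|u|\le\eta$, $\eta\le|u|\le\rho$): Newton's method from Lemma \ref{existence} gives the unique inner zero, compactness handles the far annulus, and in the intermediate annulus you show $\partial_\eps V(u,s)=cu+R(u,s)$ is dominated by its linear term so that, for exactly one sign of $\eps$, the increment $\int_0^\eps\partial_\eps V(u,s)\,ds$ reinforces the sign of $V(u,0)$ and no spurious zeros appear. The paper's argument is softer and shorter, reusing the degree machinery and concentrating the difficulty in the claim that the fold curve is one-sided; yours is more elementary and quantitative, makes explicit which side of $\eps_0$ is the one-zero side (the sign of $\eps c$ matching the sign pattern of $V(\cdot,0)$), and explains structurally why the domination fails on the other side. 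Two minor points to tidy: your remainder bound $|R(u,s)|\le C(u^2+|s|)$ tacitly uses $V\in C^3$ (or Lipschitz $\partial^2_{u\eps}V$); under mere $C^2$ regularity replace $Cu^2$ by $\omega(|u|+|s|)\,|u|$ for a modulus of continuity $\omega$, which still yields $|R|\le\tfrac12|c||u|$ after shrinking $\eta$ and $\eps_0$. Also, when quoting Lemma \ref{existence} you should note that fixed points of the Newton map coincide with zeros of $V_\eps$ only because $|V'_\eps|\ge Cc\eps>0$ on the inner ball, which that lemma's proof supplies.
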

\begin{proof}
By the implicit theorem and the assumption $ \frac{\partial^2V}{\partial\eps\partial u}(0,0)\neq 0,$ there exists 
  $(u,\eps(u))$ to be the graph of $\frac{\partial V}{\partial u}(u,\eps(u))=0.$ 
We claim that there exists a small neighborhood $(-r,r)$ such that  either $\eps(u)>0,$ for any $u\in (-r,r)$ or $\eps(u)<0,$ for any $u\in (-r,r)$. Now let's prove this claim.  Since the index  of $V(u,0)$ at $u=0$ is $1$ (the argument for the index $-1$ case is similar ), there exists a small neighborhood $(-r,r)$ such that
$V(u,0)>0, \forall u\in (0,r), V(u,0)<0, \forall u\in (-r,0).$
Hence by the mean value theorem, for any $0<r_1<r$ there exists 
$u_1\in (0,r_1)$ such that $\frac{\partial V(u,\eps)}{\partial u}\vert_{(u_1,0)}>0$ and $u_2\in (0,r_1)$ such that $\frac{\partial V(u,\eps)}{\partial u}\vert_{(u_2,0)}>0.$
On the other hand,  the graph of $\frac{\partial V}{\partial u}(u,\eps)=0$ will cut the $(u,\eps)$ space into two connected region 
$A_1=\{(u,\eps)\vert \frac{\partial V}{\partial u}(u,\eps)>0\}$
and
$A_2=\{(u,\eps)\vert \frac{\partial V}{\partial u}(u,\eps)<0\}.$
Hence the vertical line  $([-r,r],0)\backslash \{(0,0)\}$
can only lie in $A_1$.   So $(u,\eps(u))$ can not go across the vertical line $([-r,r],0)$ and that  finish the claim. 


By the definition of index, we have any zeros of $V(u,\eps)=0$ lying in $A_1$ has index $1$, any zeros of $V(u,\eps)=0$ lying in $A_1$ has index $-1$ and any zeros of $V(u,\eps)=0$ lying in $(u,\eps(u))$ can only have index $1,-1$ or $0$. By Theorem \ref{PH}, we have for sufficiently small $\vert \eps\vert$, $\sum_{V(u,\eps)=0}\text{index}(u)=1.$
If $\eps(u)>0$, we have for any $\eps<0$ sufficiently close to zero, there are no zero points on $(u,\eps(u))$, hence there is one and unique one zero $u(\eps),$ for $\eps<0$. If $\eps(u)<0$, we have for any $\eps>0$ sufficiently close to zero, there is no zero points on $(u,\eps(u))$, hence there is one and unique one zero $u(\eps),$ for $\eps>0$. \end{proof}

\begin{Cor}[Bifurcation]\label{existence1}Consider the  family of one dimensional 
vector filed:
$\dot{u}=V(u,\eps), u\in\mathbb{R}^1,\eps\in\mathbb{R}^1.$ Assume $u=0$ to be an isolated non-hyperbolic zero with non-zero index for $V(u,0)$.  Assume 
$$\frac{\partial V}{\partial\eps}(0,0)=0 \text{ and }\frac{\partial^2V}{\partial\eps\partial u}(0,0)\neq 0.$$
Then $V$ undergoes a $1\rightarrow k$ or $k\rightarrow 1,$ $k\geq 3$ around a neighborhood of $(u_0,\eps_0)$.
\end{Cor}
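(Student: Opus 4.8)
The plan is to feed the two preceding one-dimensional lemmas into the index-conservation statement of Theorem \ref{PH}. The hypotheses here are exactly those of Lemma \ref{existence} and Lemma \ref{uniqueness}, so both apply. First I would fix a closed interval $D$ about $u=0$ on whose endpoints $V(\cdot,0)$ does not vanish, which is possible because $u=0$ is an isolated zero. As $u=0$ has non-zero index for a one-dimensional field, that index is $+1$ or $-1$, and I assume $+1$ (the other case is symmetric). Because $V(\cdot,\eps)\to V(\cdot,0)$ uniformly on $\partial D$ and $V(\cdot,0)\neq 0$ there, the normalized maps $V(\cdot,\eps)/\|V(\cdot,\eps)\|$ and $V(\cdot,0)/\|V(\cdot,0)\|$ are homotopic on $\partial D$ for all small $|\eps|$; hence by homotopy invariance of the degree, the value $Q$ on $\partial D$ equals $+1$ for every sufficiently small $|\eps|$.

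Next I would combine the conclusions of the two lemmas. Lemma \ref{uniqueness} supplies a side of the bifurcation — either $\eps\in[-\eps_0,0]$ or $\eps\in[0,\eps_0]$ — on which $V(\cdot,\eps)$ has exactly one zero in a neighborhood $U$ of $0$. After choosing $D\subset U$ and shrinking so that the ball $B_{\eps^{1-\delta}}(0)$ of Lemma \ref{existence} is contained in $D$, that unique zero must coincide with the continuation zero $u_\eps$ produced by Lemma \ref{existence}, and, being the only zero in $D$ on this side, it has index $Q=+1$ there. Lemma \ref{existence} also records that $u_\eps$ is nondegenerate for $\eps\neq 0$ and that its index changes sign across $\eps=0$; consequently, on the \emph{opposite} side the index of $u_\eps$ equals $-1$.

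It then remains to count zeros on that opposite side. If $V(\cdot,\eps)$ has only finitely many zeros in $D$, Theorem \ref{PH} identifies $Q$ with the sum of their indices; since $\mathrm{ind}(u_\eps)=-1$ while $Q=+1$, the zeros other than $u_\eps$ contribute $+2$ to the sum, and as every isolated zero of a one-dimensional field has index in $\{-1,0,+1\}$, at least two further zeros are needed. Hence $D$ contains at least three zeros (the bound being trivial if there are infinitely many). Together with the single-zero side, this is precisely a $1\rightarrow k$ or $k\rightarrow 1$ bifurcation with $3\leq k\leq+\infty$, according to whether the unique side is $\eps\leq 0$ or $\eps\geq 0$.

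The homotopy invariance of the degree and the sign-flip of $\mathrm{ind}(u_\eps)$ are already furnished by Theorem \ref{PH} and Lemma \ref{existence}, so those steps are routine. The one point deserving care — the main obstacle — is to make the two lemmas speak about the \emph{same} zero: one must select a single interval $D\subset U$ with $V(\cdot,0)\neq 0$ on $\partial D$ and verify that $B_{\eps^{1-\delta}}(0)\subset D$ uniformly for all small $\eps$, so that the unique zero of Lemma \ref{uniqueness} is indeed the continuation zero of Lemma \ref{existence} and the entire index accounting is carried out on one fixed disk. Once these neighborhoods are matched, the conclusion is immediate.
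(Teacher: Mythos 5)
Your proposal is correct and follows essentially the same route as the paper: the paper's proof likewise combines Lemma \ref{existence} (continuation zero $u_\eps$ with an index that flips sign across $\eps=0$) with Lemma \ref{uniqueness} (a single-zero side) and concludes via index conservation that at least two further zeros must appear on the other side. Your version merely makes explicit the degree bookkeeping on a fixed disk $D$ and the identification of the two lemmas' zeros, details the paper leaves implicit.
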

\begin{proof}By Lemma \ref{existence}, there always exists  zero $x_{\eps}$ for $V(\cdot,\eps)$. By Lemma \ref{uniqueness}, there exists a neighborhood $U$ of $x=0$ such that either $x_{\eps}$ is the only zero for $V(\cdot,\eps)$, for sufficiently close to zero negative $\eps$ or for sufficiently close to zero positive $\eps$. Assume it holds for negative $\eps$. By Lemma \ref{existence} again, the index of $u_{\eps}$ changes sign when $\eps$ varies from negative to positive. 
Hence  there must be at least two other zeros inside $U$ for $\eps>0$.  
\end{proof}

Finally, let's give a criterion for the $1\rightarrow 3$  or $3\rightarrow 1$ bifurcation. The condition  $\frac{\partial^3V}{\partial^3u}(0,0)\neq 0$ in the following corollary plays the role of the multiplicity assumption in Corollary \ref{RScor}. 

\begin{Cor}[Pitchfork Bifurcation]\label{onethree}Consider the  family of one dimensional 
vector filed:
$\dot{u}=V(u,\eps), u\in\mathbb{R}^1,\eps\in\mathbb{R}^1.$ Assume $u=0$ to be an isolated non-hyperbolic zero with non-zero index for $V(u,0)$.  Assume 
$$\frac{\partial V}{\partial\eps}(0,0)=0, \frac{\partial^2V}{\partial\eps\partial u}(0,0)\neq 0, \text{ and }\frac{\partial^3V}{\partial^3u}(0,0)\neq 0.$$
Then $V(x,\eps)$ undergoes a $1\rightarrow 3$ or $3\rightarrow 1$ bifurcation around $(0,0)$.
\end{Cor}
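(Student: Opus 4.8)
The plan is to combine the lower bound that is already available with a matching upper bound coming from the new third-order hypothesis. First I would invoke Corollary \ref{existence1}: the two conditions $\frac{\partial V}{\partial\eps}(0,0)=0$ and $\frac{\partial^2V}{\partial\eps\partial u}(0,0)\neq 0$ are exactly its hypotheses, so $V$ already undergoes a $1\to k$ or $k\to 1$ bifurcation with $k\geq 3$ in some neighborhood $U\times(-\eps_0,\eps_0)$ of $(0,0)$. Concretely, on the side of the parameter where the equilibrium is not unique there are at least three distinct zeros of $V(\cdot,\eps)$, and by Lemma \ref{existence} these cluster at $u=0$ as $\eps\to 0$. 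It therefore remains only to show that on that side there are at most three zeros, which forces $k=3$.

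For the upper bound I would use the hypothesis $\frac{\partial^3V}{\partial^3u}(0,0)\neq 0$ together with Rolle's theorem. Since $\partial_u^3 V$ is continuous and nonzero at $(0,0)$, there is a neighborhood $U_0\times(-\eps_1,\eps_1)\subset U\times(-\eps_0,\eps_0)$ on which $\partial_u^3 V$ never vanishes. Fix $\eps$ with $|\eps|<\eps_1$ and write $V_\eps(u)=V(u,\eps)$. If $V_\eps$ had four or more distinct zeros in $U_0$, then three successive applications of Rolle's theorem would produce a point of $U_0$ at which $V_\eps'''=\partial_u^3 V(\cdot,\eps)$ vanishes, contradicting the choice of $U_0$. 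Hence $V_\eps$ has at most three zeros in $U_0$ for every such $\eps$. (An equivalent route is the Malgrange preparation theorem: because $V(\cdot,0)$ has a zero of order exactly three at the origin—$V(0,0)=V_u(0,0)=V_{uu}(0,0)=0$ while $V_{uuu}(0,0)\neq 0$—one may factor $V(u,\eps)=Q(u,\eps)\bigl(u^3+a_2(\eps)u^2+a_1(\eps)u+a_0(\eps)\bigr)$ with $Q(0,0)\neq 0$ and $a_j(0)=0$, and a real cubic has at most three real roots; I would still prefer the Rolle argument as it avoids that machinery.)

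Finally I would reconcile the two neighborhoods. The three zeros furnished by Corollary \ref{existence1} concentrate at $u=0$ as $\eps\to 0$: the continued zero lies in $B_{\eps^{1-\delta}}(0)$ by Lemma \ref{existence}, and the two extra zeros forced by the index balance of Theorem \ref{PH} likewise accumulate at the origin. Hence, after shrinking $\eps_1$ if necessary, all of them lie inside $U_0$. Thus on the many-equilibria side there are simultaneously at least three and at most three zeros in $U_0$, so exactly three, while on the opposite side Lemma \ref{uniqueness} gives a unique zero. This is precisely a $1\to 3$ or $3\to 1$ bifurcation.

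I expect the only delicate point to be this matching of neighborhoods: one must make sure that the three zeros guaranteed by the lower-bound argument genuinely sit inside the region where $\partial_u^3 V\neq 0$, so that the Rolle count applies to them and no zero is lost in passing to the smaller neighborhood. Since shrinking the neighborhood can only decrease the count, the care is entirely in choosing $\eps$ small enough that the (shrinking) cluster of zeros remains captured; all other steps are elementary.
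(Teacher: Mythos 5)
Your proposal is correct and follows essentially the same route as the paper: the paper's proof consists precisely of the observation that $\frac{\partial^3V}{\partial u^3}(0,0)\neq 0$ bounds the local number of zeros by three, combined with Corollary \ref{existence1} for the lower bound. Your Rolle's-theorem argument and the care about matching neighborhoods simply make explicit what the paper leaves as a one-line assertion.
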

\begin{proof}  Since $\frac{\partial^3V}{\partial^3u}(0,0)\neq 0$, locally the maximal number of zeros is $3$. By Corollary \ref{existence1},  it undergoes a $1\rightarrow 3$  or $3\rightarrow 1$bifurcation. We finish the proof. 
\end{proof}

\section{The undergoing of bifurcations}
In this section, we present the proof of the undergoing of bifurcation under the assumptions (P0), (P1) and (P2), i.e., the proof of Theorem \ref{BB}. First of all, let's study the invariance of (P0),(P1) and (P2) under the change of coordinates. In the following argument, we shall use an equivalent condition for (P1):
\begin{enumerate}
\item[(P1')]$v_l\frac{\partial V}{\partial \eps}=0$, where $v_lD_xV(x_0,\eps_0)=0$. 
\end{enumerate}
The following lemma shows that the assumption (P2) makes sense. 
\begin{Lem}For the vector filed $V(x,\eps)$ with the conditions (P0) and (P1), there exists $\omega=(\omega_1,\cdots,\omega_n,\omega_{n+1})^{\top}$ such that 
$DV(x_0,\eps_0)\omega=0 \text{ and } \omega_{n+1}\neq 0. $

\end{Lem}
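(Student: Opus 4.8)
The plan is to unpack the block structure of the full derivative $DV(x_0,\eps_0)$ and then read off the desired kernel vector directly from the containment asserted by (P1). Write $A:=D_xV(x_0,\eps_0)$, an $n\times n$ matrix, and $b:=\frac{\partial V}{\partial\eps}(x_0,\eps_0)\in\RR^n$. Since $DV(x_0,\eps_0)$ is the Jacobian in all of the variables $(x,\eps)$, it is the $n\times(n+1)$ matrix whose first $n$ columns are $A$ and whose last column is $b$, i.e. $DV(x_0,\eps_0)=[\,A\mid b\,]$. Splitting a candidate vector as $\omega=(\omega',\omega_{n+1})$ with $\omega'\in\RR^n$ and $\omega_{n+1}\in\RR$, the equation $DV(x_0,\eps_0)\omega=0$ becomes exactly
\[
A\omega'+\omega_{n+1}\,b=0 ,
\]
so the task reduces to producing a solution of this linear system with $\omega_{n+1}\neq0$.

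The key step is to invoke (P1): it says precisely that $b\in\text{image}(A)$, so there is some $v\in\RR^n$ with $Av=b$. I would then set $\omega_{n+1}:=1$ and $\omega':=-v$; substituting gives $A(-v)+1\cdot b=-b+b=0$, while $\omega_{n+1}=1\neq0$, as required. Any other nonzero value of $\omega_{n+1}$ works after rescaling $v$ accordingly, so such an $\omega$ is far from unique.

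There is essentially no obstacle here: the statement is a direct translation of (P1), whose only role is to guarantee that the first half of (P2) is automatically satisfiable, so that the genuine content of (P2) is the transversality requirement $D(\det(D_xV))(x_0,\eps_0)\omega\neq0$. The only thing to watch is the bookkeeping of conventions — that $DV$ denotes the full Jacobian in $(x,\eps)$ rather than $D_xV$, and that the last coordinate of $\omega$ is the one multiplying the $\eps$-column $b$. It is also worth recording the equivalent dual formulation (P1') used later: because the zero eigenvalue of $A$ is simple, $\ker(A^{\top})$ is one dimensional, spanned by a left null vector $v_l$ with $v_lA=0$; then $b\in\text{image}(A)$ if and only if $v_l\,b=0$, which is exactly the condition $v_l\frac{\partial V}{\partial\eps}=0$ of (P1'). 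Thus (P1) and (P1') coincide, and either one supplies the vector $\omega$.
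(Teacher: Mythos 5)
Your proof is correct, and it takes a more direct route than the paper's. The paper forms the extended square $(n+1)\times(n+1)$ matrix $\begin{bmatrix}D_xV&D_{\eps}V\\0&0\end{bmatrix}$ of the system augmented by $\dot{\eps}=0$, exhibits the two independent left null vectors $(v_l,0)$ and $(0,1)$ (the first being a left null vector precisely because of (P1) in its dual form $v_l\,\partial V/\partial\eps=0$), and then uses equality of left and right nullity to conclude that the right kernel is at least two-dimensional; since its intersection with the hyperplane $\{\omega_{n+1}=0\}$ is only the one-dimensional $\ker(D_xV)\times\{0\}$ (the zero eigenvalue being simple), a kernel vector with $\omega_{n+1}\neq0$ must exist. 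You instead use (P1) as literally stated, $b:=\partial V/\partial\eps\in\mathrm{image}(D_xV)$, solve $Av=b$, and write down $\omega=(-v,1)$ explicitly. Your construction is shorter and avoids the dimension count entirely; the paper's argument has the marginal extra payoff of identifying the full two-dimensional kernel of the extended matrix (spanned by $(v_r,0)$ and your $\omega$), which is implicitly used later when (P2) quantifies over kernel vectors with $\omega_{n+1}\neq0$. Your closing observation that (P1) and (P1') coincide, via $\mathrm{image}(A)=\ker(A^{\top})^{\perp}$ and the simplicity of the zero eigenvalue, is exactly the bridge the paper uses without comment, so it is worth having on record.
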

\begin{proof}Denote by $v_l$ and $v_r$  the vectors such that $$v_lD_x V(x_0, \eps_0)=0 \text{ and } D_x V(x_0, \eps_0)v_r=0.$$
It is straightforward that  $(v_l,0)DV=0.$ Assume the extended vector fields to be $
\dot{x}=V(x,\eps),
\dot{\eps}=0.$
Differentiating the extended vector field, we have $\begin{bmatrix}D_xV&D_{\eps}V\\0&0
\end{bmatrix}$
with $(v_l,0)$ and $(0,1)$ as two left eigenvectors for the eigenvalue zero. Since the dimension of the left null space and the right null space are the same, there exists an vector $\omega=(\omega_1,\cdots,\omega_n,\omega_{n+1})^{\top}$ such that
$DV(x_0,\eps_0)\omega=0 \text{ and } \omega_{n+1}\neq 0.$ 
\end{proof}

\begin{Lem}\label{invariantBB}For a family of vector fields $\dot{x}=V(x,\eps)$, the following conditions:
 \begin{enumerate}
\item[(P0)] $V(x,\eps)$  has an isolated simple non-hyperbolic equilibrium $(x_0,\eps_0)$ with non-zero index. Denote by $v_l$ and $v_r$  the unique left eigenvector for the eigenvalue $0$, i.e., $v_lD_x V(x_0, \eps_0)=0 \text{ and } D_x V(x_0, \eps_0)v_r=0.$
\item[(P1)]  $v_l\frac{\partial V}{\partial\eps}\vert_{(x_0,\eps_0)}=0;$
\item[(P2)]$D(\text{det}(D_xV))(x_0,\eps_0)\omega\neq 0,$ for any $\omega=(\omega_1,\cdots,\omega_{n+1})^{\top}$ such that 
$$DV(x_0,\eps_0)\omega=0\text{ and }\omega_{n+1}\neq 0,$$ 
 \end{enumerate}
are invariant under the linear change of coordinates $\tilde{A}=
\begin{bmatrix}A&\ast \\
0&1\end{bmatrix}.$ 
\end{Lem}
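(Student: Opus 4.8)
The plan is to track how each of the three conditions transforms under the substitution and to reduce everything to conjugation-type identities. Write the change of coordinates as $\tilde A=\begin{bmatrix}A&b\\0&1\end{bmatrix}$, with $b$ the block denoted $\ast$, so that $(\tilde x,\tilde\eps)=\tilde A(x,\eps)^{\top}$, i.e. $\tilde x=Ax+b\eps$ and $\tilde\eps=\eps$; in particular the parameter direction is preserved. The first step is to record the induced field. Since $\dot{\tilde x}=A\dot x+b\dot\eps=AV$ and $\dot{\tilde\eps}=0$, the new field is $\tilde V(\tilde x,\tilde\eps)=A\,V\bigl(A^{-1}(\tilde x-b\tilde\eps),\tilde\eps\bigr)$, and the equilibrium $(x_0,\eps_0)$ is sent to $(\tilde x_0,\eps_0)=(Ax_0+b\eps_0,\eps_0)$, which one checks is again a zero.

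Next I differentiate. By the chain rule $D_{\tilde x}\tilde V=A\,D_xV\,A^{-1}$ at corresponding points, so $D_{\tilde x}\tilde V(\tilde x_0,\eps_0)$ is conjugate to $D_xV(x_0,\eps_0)$; conjugation preserves the spectrum, hence the simple eigenvalue $0$ and the absence of other imaginary eigenvalues persist. Isolatedness is clear because $\tilde x\mapsto A^{-1}(\tilde x-b\eps_0)$ is a local diffeomorphism, and non-vanishing of the index follows from the coordinate-invariance of the index recorded in the remark preceding this lemma (at the fixed parameter $\eps_0$ the substitution is an affine map, a composition of a linear isomorphism and a translation, both index-preserving). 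This gives (P0) for $\tilde V$. For (P1) I use the equivalent form (P1$'$). The left null vector transforms as $\tilde v_l=v_lA^{-1}$, since $\tilde v_l(A D_xV A^{-1})=0$ forces $\tilde v_lA D_xV=0$, and the left null space is one-dimensional. A second chain-rule computation gives $D_{\tilde\eps}\tilde V(\tilde x_0,\eps_0)=A\bigl(-D_xV\,A^{-1}b+D_\eps V\bigr)$; pairing on the left with $\tilde v_l$ and using $v_lD_xV=0$ collapses the $b$-term, leaving $\tilde v_l\,\partial_{\tilde\eps}\tilde V=v_l\,\partial_\eps V=0$, so (P1$'$) is preserved.

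The heart of the argument, and the step I expect to require the most care, is (P2), because there I must follow both the scalar function $\det D_xV$ and the kernel of the full $n\times(n+1)$ Jacobian through the change of variables. Two identities drive it. First, $\det D_{\tilde x}\tilde V=\det(A\,D_xV\,A^{-1})=\det D_xV$, so the scalar $g:=\det D_xV$ is genuinely invariant, $\tilde g=g\circ\tilde A^{-1}$, whence the row vector transforms as $D\tilde g=Dg\,\tilde A^{-1}$ at corresponding points. Second, assembling the two blocks shows the full Jacobian transforms as $D\tilde V=A\,DV\,\tilde A^{-1}$, so $\ker D\tilde V=\tilde A(\ker DV)$, i.e. $\tilde\omega\in\ker D\tilde V$ iff $\omega:=\tilde A^{-1}\tilde\omega\in\ker DV$. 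Because the last row of $\tilde A$ is $(0,\dots,0,1)$ one has $\tilde\omega_{n+1}=\omega_{n+1}$, so the side condition $\omega_{n+1}\neq0$ is preserved exactly.

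Combining these, $D\tilde g(\tilde x_0,\eps_0)\,\tilde\omega=Dg(x_0,\eps_0)\,\tilde A^{-1}\tilde\omega=Dg(x_0,\eps_0)\,\omega\neq0$ by (P2) for $V$, which yields (P2) for $\tilde V$ and completes the three cases. The only genuine bookkeeping subtlety is keeping the affine shift straight in $\tilde A^{-1}=\begin{bmatrix}A^{-1}&-A^{-1}b\\0&1\end{bmatrix}$ and observing that it cancels in both places where it appears: in the determinant via conjugation by $A$, and in the kernel/derivative pairing via the matched factors $\tilde A^{-1}$. Everything else is routine chain rule, so I would present the (P2) computation in full and compress (P0)–(P1).
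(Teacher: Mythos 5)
Your proposal is correct and follows essentially the same route as the paper's proof: chain-rule computation of $D\tilde V=A\,DV\,\tilde A^{-1}$, conjugation-invariance of $\det D_xV$, the kernel correspondence $\tilde\omega=\tilde A\omega$ with preservation of the last coordinate, and the pairing $D\tilde g\,\tilde\omega=Dg\,\omega$. Your treatment of (P1) via the explicit cancellation of the $-D_xV A^{-1}b$ term against $v_l$ is in fact a cleaner rendering of the same computation the paper performs.
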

\begin{proof}
Consider the following linear change of coordinates:
$(\tilde{x},\tilde{\eps})^{\top}=\tilde{A}(x,\eps)^{\top},$
where
$\tilde{A}=
\begin{bmatrix}A&\ast \\
0&1\end{bmatrix}.$
Denote by $\tilde{A}^{-1}$ the inverse matrix of $\tilde{A}$. Since the inverse of the upper triangular matrix are still upper triangular, we have
$\tilde{A}^{-1}=
\begin{bmatrix}A^{-1}&\ast \\
0&1\end{bmatrix}.$ 
 Under the new coordinates, the vector field becomes
$
(\dot{\tilde{x}})^{\top}=AV(\tilde{A}^{-1}(\tilde{x},\tilde{\eps})^{\top}):=\tilde{V}(\tilde{x},\tilde{\eps}).$
Denote by $\omega^1:=\begin{bmatrix}v_r\\0\end{bmatrix}, \omega=(\omega_1,\cdots,\omega_n,\omega_n)^{\top}$ the base for the kernel of $D_{(x,\eps)}V(x_0,\eps_0)$.
Since 
$D_{(\tilde{x},\tilde{\eps})}\tilde{V}(\tilde{x}_0,\tilde{\eps}_0)=ADV(\tilde{A}^{-1}(\tilde{x},\tilde{\eps})^{\top})\tilde{A}^{-1},$
we have 
\begin{eqnarray*}ADV(\tilde{A}^{-1}(\tilde{x},\tilde{\eps})^{\top})\tilde{A}^{-1}\tilde{A}\begin{bmatrix}v_r\\0\end{bmatrix}=ADV(\tilde{A}^{-1}(\tilde{x},\tilde{\eps})^{\top})\begin{bmatrix}v_r\\0\end{bmatrix}=0\end{eqnarray*}
and  $ADV(\tilde{A}^{-1}(\tilde{x},\tilde{\eps})^{\top})\tilde{A}^{-1}\tilde{A}\omega=ADV(\tilde{A}^{-1}(\tilde{x},\tilde{\eps})^{\top})\omega=0. $
Hence the base for the center direction of the kernel $D_{(\tilde{x},\tilde{\eps})}\tilde{V}(\tilde{x}_0,\tilde{\eps}_0)$ is
$\{\tilde{A}\omega^1=\begin{bmatrix}Av_r\\0\end{bmatrix},\tilde{A}\omega\}.$ 
For  the vector field $\tilde{V}$, let's check the conditions (P0),(P1) and (P2). Assume $(\tilde{x}_0,\eps_0)$ to be the fixed points. 
Actually, the first condition (P0) $\text{index}(\tilde{x}_0)=\text{index}(x_0)$ holds, since index is topological invariant.  

Let's verify (P1). First of all, the left eigenvector of $\tilde{V}$ for the eigenvalue zero is given by 
$v_lA^{-1}.$
Hence we have $$v_lA^{-1}D_{\tilde{\eps}}\tilde{V}\vert_{\tilde{x}_0}=v_lA^{-1}AD_{\tilde{\eps}}(V(\tilde{A}^{-1}(\tilde{x},\eps))\tilde{A}^{-1}(0,1)^{\top}
=0. $$
Now let's check the condition (P2) for the vector field $\tilde{V}$. For this vector field, we have 
$
D_{\tilde{x}}\tilde{V}=AD_xV(\tilde{A}^{-1}(\tilde{x},\eps))A^{-1}.$
Moreover, it follows that
\begin{eqnarray*}
\det(D_{\tilde{x}}\tilde{V})=\det(AD_xV(\tilde{A}^{-1}(\tilde{x},\eps))A^{-1})
=\det D_xV(\tilde{A}^{-1}(\tilde{x},\tilde{\eps})).
\end{eqnarray*}
Hence we have
$
D(\text{det}(D_{\tilde{x}}(\tilde{V}))(\tilde{x},\tilde{\eps})=D\det(D_xV)(\tilde{A}^{-1}(\tilde{x},\tilde{\eps}))\tilde{A}^{-1}.
$
For any $\tilde{\omega}=(\tilde{\omega}_0,\cdots,\tilde{\omega}_{n+1})^{\top}$, we have the $$\omega=\tilde{A}^{-1}\tilde{\omega}=\begin{bmatrix}A^{-1}(\omega_1,\cdots,\omega_n)^{\top}+\omega_{n+1}\ast, \\\omega_{n+1}\end{bmatrix}.$$ 
So  $\omega_{n+1}\neq 0$ if and only if $\tilde{\omega}_{n+1}\neq 0.$ 
On the other hand, we have
\begin{eqnarray*}
D(\text{det}(D_{\tilde{x}}\tilde{V})(\tilde{x}_0,\tilde{\eps}_0))\tilde{\omega}&=&D\det(D_xV)(\tilde{A}^{-1}(\tilde{x}_0,\tilde{\eps}_0))\tilde{A}^{-1}(\tilde{A}\omega-t\tilde{A}v_r)\\
&=&D\det(D_xV)(\tilde{A}^{-1}(\tilde{x}_0,\tilde{\eps}_0))\omega\\
&=&D\det(D_xV)(x_0,\eps_0)))\omega\neq 0.\end{eqnarray*}
Hence (P2) still holds. 
\end{proof}

Now we are ready to present the proof of Theorem \ref{BB}.

\begin{proof}[The proof of Theorem \ref{BB}]

By Theorem \ref{PH}, 
a continuous deformation would not change the total index in $U$,
that is, $\mathrm{index}(V(\cdot,\eps), U)=\text{index}(x_0).$
By Lemma \ref{invariantBB},  we can assume the vector field $V$ is of the form $\begin{cases}\dot{u}=F(u,y,\eps)\\
\dot{y}=My+G(u,y,\eps)
\end{cases}$
where $u\in\mathbb{R}^1$ and $y\in\mathbb{R}^{n-1}$, the square matrix $M$ has eigenvalues with only non-zero real parts and 
$$F(0,0,0)=0, D_{(u,y)}F(0,0,0)=0, G(0,0,0)=0, DG(0,0,0)=0,$$
with the conditions (P0),(P1) and (P2). 
The left center direction for $V$ now is $v_l=(1,0)$. By (P1), we have 
$v_lD_{\eps}V(0,0,0)=D_{\eps}F(0,0,0)=0.$ Hence we have $DF(0,0,0)=0$. So we can apply Theorem \ref{linear1} to the extended vector field by adding $\dot{\eps}=0$ as one direction. 
By Theorem \ref{linear1}, there exists a smooth function $h(u,\eps)$ which represents the center manifold for $V$. 
The vector field along the center becomes 
$\dot{u}=F(u,h(u,\eps),\eps).$
By Lemma \ref{indexproduct}, the index of $(u,y)=(0,0)$ for $V(u,y,0)$ is non-zero if and only if  the index of $u=0$ is non-zero for $F(u,h(u,0),0)$. 
Hence by (P0) assumption, it follows that the first $k$ such that $\frac{\partial^k F}{\partial u^k}(0,0)\neq 0$ is an odd number and $k\geq 3$.

 \textbf{Claim 1:} $\frac{\partial^2F}{\partial u\partial \eps}(0,0)\neq 0$. 
First of all, let's give some discussion on $DV$. We have 
$$D_{(u,y)}V(u,y,\eps)=\begin{bmatrix}D_uF(u,y,\eps) & D_yF(u,y,\eps)\\
D_uG(u,y,\eps)&M+D_yG(u,y,\eps)
\end{bmatrix}.$$
By Jacobi's formula, 
\begin{eqnarray*}&&\frac{\partial \det(D_{(u,y)}V(u,0,0))}{\partial u}
\\&=&tr(adj\begin{bmatrix}D_uF(u,0,0) & D_yF(u,0,0)\\
D_uG(u,0,0)&M+D_yG(u,0,0)
\end{bmatrix}\begin{bmatrix}D_{uu}F(u,0,0) & D_{yu}F(u,0,0)\\
D_{uu}G(u,0,0)&D_{yu}G(u,0,0)
\end{bmatrix})\\
&=&tr(adj\begin{bmatrix}D_uF(u,0,0) & D_yF(u,0,0)\\
D_uG(u,0,0)&M+D_yG(u,0,0)
\end{bmatrix}\begin{bmatrix}D_{uu}F(u,0,0) & D_{yu}F(u,0,0)\\
D_{uu}G(u,0,0)&D_{yu}G(u,0,0)\end{bmatrix}).
\end{eqnarray*}
 Hence at $u=0$, we have
 \begin{eqnarray*}\frac{\partial \det(D_{(u,y)}V(0,0,0))}{\partial u}&=&tr(adj\begin{bmatrix}0 & 0\\
0&M
\end{bmatrix}\begin{bmatrix}0 & D_{yu}F(0,0,0)\\
D_{uu}G(0,0,0)&D_{yu}G(0,0,0)\end{bmatrix})\\
&=&tr(\begin{bmatrix}\det M & 0\\
0&0
\end{bmatrix}\begin{bmatrix}0 & D_{yu}F(0,0,0)\\
D_{uu}G(0,0,0)&D_{yu}G(0,0,0)\end{bmatrix})\\
&=&0.
\end{eqnarray*}

It is easy to see that the kernel of  $DV(u,y,\eps)$ has $(1,0,0)$ and $(0,0,1)$ as a base. 
Since along the direction $(1,0,0)$, we have 
\begin{eqnarray*}D_{(u,y,\eps)}\det(D_{(u,y)}V)\vert_{(0,0,0)}(1,0,0)^{\top}&=&\frac{\partial \det(D_{(u,y)}V)}{\partial u}\vert_{(0,0,0)}\\
&=&\frac{\partial \det(D_{(u,y)}V(u,0,0))}{\partial u}\vert_{u=0}=0.
\end{eqnarray*}
Hence by assumption (P2), 
 we have
 $$D_{(u,y,\eps)}\det(D_{(u,y)}V)\vert_{(0,0,0)}(0,0,1)^{\top}=\frac{\partial \text{Det} D_xV}{\partial \eps}\vert_{(x_0,\eps_0)}\neq 0. $$
On the other hand, 
$\text{Det}D_{(u,y)}V(0,0,\eps)=\frac{\partial F}{\partial u}(0,0,\eps)(\text{Det}(M+\frac{\partial G}{\partial y}(0,0,\eps))).$
Hence 
\begin{equation*}
\frac{\partial \text{Det} D_xV\vert_{(x_0,\eps_0)}}{\partial \eps}=\frac{\partial^2 F}{\partial u\partial\eps}(0,0,0)(\text{Det}(M+\frac{\partial G}{\partial y}(0,0,0)))
=\frac{\partial^2F}{\partial u\partial \eps}\cdot\text{Det}M\neq0. 
\end{equation*}
So we get $\frac{\partial^2F}{\partial u\partial \eps}\neq 0. $ We complete the proof of Claim 1. Hence the function 
$V^c(u,\eps):=F(u,h(u,\eps),\eps)$
satisfies all of the following conditions in Corollary \ref{existence1}. By Corollary \ref{existence1}, $V$ undergoes a $1\rightarrow k$ or $k\rightarrow 1$ bifurcation.
\end{proof}

\begin{Rem}
Fix $V\in\mathcal{F}$. For $\eps>0,$ the number of zeros is less than or equal to the first non-vanishing jet of $V(x,0)$ restricted to the center manifold. 
\end{Rem}
At the end of this section, we would like to prove Corollary \ref{RScor} from Theorem \ref{BB}.
\begin{proof}[The proof of Corollary \ref{RScor}] Since $\frac{d\lambda_1}{d\eps}(x_0,\eps_0)>0$, $\lambda_1(x_0,\eps_0)=0$,  we have $\lambda_1(x_0,\eps)<0$, for  $\eps<\eps_0$ and close enough to $\eps_0$. Hence the index of $\lambda_1(x_0,\eps)\neq0.$ By the isolated requirement on the fixed points $(x_0,\eps)$ for $\eps<\eps_0$ and the stability of the index of fixed points, we know the index of $(x_0,\eps_0)$ is non zero. By $\lambda_1(x_0,\eps_0)=0$ again, we know $\frac{\partial V}{\partial \eps}(x_0,\eps_0)=0$. Then the condition (P1) follows. By $\frac{d\lambda_1}{d\eps}(x_0,\eps_0)>0$ and  $\lambda_1(x_0,\eps_0)=0$, we know $D_x(\det D_xV)=0$ and $D_{\eps}(\det D_xV)\neq 0$. So (P2) holds. 
Hence we have all of the conditions required in Theorem \ref{BB}. It follows that there exists $1$ to $k, k\geq \infty$  bifurcations. By the assumption on the multiplicity, there are at most three fixed points showing up. So it is pitchfork bifurcation. We finish the proof of this corollary. 
\end{proof}

\section{Pitchfork bifurcation and its genericity}

In this section, we shall prove the criterion for pitchfork bifurcation and its genericity. 
To do this, we would like to state an equivalent condition first.
\begin{Lem}\label{P3invariant}For any vector field $V$, assume $(c_1(u),\cdots,c_n(u))$ to be the center manifold. The following condition
\begin{enumerate}
\item[(P3)'] $(\det(D_xV(c_1(u),\cdots,c_n(u))))''\vert_{u_0}\neq 0$
where $u_0$ is the bifurcation point
\end{enumerate}
is equivalent to 
\begin{enumerate}
\item[(P3)''] $(N^{\top}D_x^2\det(D_xV)N+D_x\det(D_xV)N')\vert_{(x_0)}\neq 0$
where $N'=(c_1''(u),\cdots,c_n''(u)).$
\end{enumerate}
\end{Lem}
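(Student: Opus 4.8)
The plan is to recognize that (P3)$'$ and (P3)$''$ are literally the same real number, related by nothing more than the second-order chain rule applied to the scalar function $\det(D_xV)$ restricted to the center manifold. First I would introduce the shorthand $g(x):=\det(D_xV(x))$, so that $g\colon\mathbb{R}^n\to\mathbb{R}^1$ is a smooth scalar-valued function, and write $c(u)=(c_1(u),\dots,c_n(u))$ for the parametrization of the center manifold, with $N:=c'(u)=(c_1'(u),\dots,c_n'(u))$ its tangent vector and $N'=c''(u)=(c_1''(u),\dots,c_n''(u))$ exactly as in the statement. With this notation the function differentiated in (P3)$'$ is precisely the composite $G(u):=g(c(u))$.

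Next I would differentiate $G$ twice. The first derivative is $G'(u)=D_xg(c(u))\,c'(u)=D_xg\cdot N$ by the chain rule. Differentiating again, applying the product rule to the two $u$-dependent factors $D_xg(c(u))$ and $N$, and using the symmetry of the Hessian, gives
$$G''(u)=\frac{d}{du}\big(D_xg(c(u))\big)\,N+D_xg(c(u))\,N'=N^{\top}D_x^2g(c(u))\,N+D_xg(c(u))\,N',$$
where the first term arises because $\frac{d}{du}D_xg(c(u))=N^{\top}D_x^2g(c(u))$. Evaluating at the bifurcation value $u_0$, for which $c(u_0)=x_0$, the left-hand side is exactly the quantity in (P3)$'$ and the right-hand side is exactly the quantity in (P3)$''$.

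Since $G''(u_0)$ and the expression $N^{\top}D_x^2\det(D_xV)\,N+D_x\det(D_xV)\,N'\big|_{x_0}$ are thus equal as real numbers, one is nonzero if and only if the other is, which is the asserted equivalence. There is essentially no obstacle here beyond careful bookkeeping of indices: the only two points deserving a word of care are that $N$ in (P3)$''$ is to be understood as the tangent vector $c'(u_0)$ to the center manifold (left implicit in the statement), and that all derivatives of $g$ appearing in (P3)$''$ are evaluated at $x_0=c(u_0)$ rather than at an arbitrary ambient point. Both are automatic once the composite $G(u)=g(c(u))$ is written down, so the content of the lemma is precisely this second-order chain rule identity, recorded because (P3)$''$ expresses the condition through quantities computable directly at $x_0$ in the ambient space.
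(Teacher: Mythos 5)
Your proposal is correct and follows essentially the same route as the paper: both proofs are a direct application of the second-order chain rule to $g(c(u))$ with $g=\det(D_xV)$, yielding $N^{\top}D_x^2 g\,N + D_xg\,N'$, where $N=c'(u_0)$ (which the paper normalizes to $(1,0,\dots,0)^{\top}$). Your added remark that $N$ must be read as the tangent vector $c'(u_0)$ and that all derivatives are evaluated at $x_0=c(u_0)$ is a helpful clarification but not a departure from the paper's argument.
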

\begin{proof}This is basically due to the chain rule. Denote by $N=(1,0,\cdots,0)^{\top}$.
It follows from 
\begin{eqnarray*}
&&(\det(D_xV(c_1(u),\cdots,c_n(u))))''
=(D\det(D_xV(c_1(u),\cdots,c_n(u)))(c_1',\cdots,c_n')^{\top})'\\
&=& (c_1',\cdots,c_n')D^2\det(D_xV(c_1(u),\cdots,c_n(u)))(c_1',\cdots,c_n')^{\top}\\
&&+D\det(D_xV(c_1(u),\cdots,c_n(u)))(c_1'',\cdots,c_n'')^{\top}\\
&=&(N^{\top}D_x^2\det(D_xV)N+D_x\det(D_xV)N')\vert_{(x_0,\eps_0)} \neq 0.\end{eqnarray*}
\end{proof}

\begin{Lem}For the vector field $V$ of the form
$V:=\begin{cases}\dot{u}=F(u,y,\eps)\\
\dot{y}=My+G(u,y,\eps)
\end{cases}$
where $u\in\mathbb{R}^1$ and $y\in\mathbb{R}^{n-1}$, the square matrix $M$ has eigenvalues with only non-zero real parts and 
$F(0,0,0)=0, D_{(u,y)}F(0,0,0)=0, G(0,0,0)=0, DG(0,0,0)=0,$
we have (P3) is equivalent to 
\begin{enumerate}\item[(P3)'']
$(N^{\top}D_x^2\det(D_xV)N+D_x\det(D_xV)N')\vert_{(x_0)}\neq 0$
\end{enumerate}
with the center manifold $(u, c_2(u),\cdots, c_n(u))$, $N=(1,0,0)$ and $N'=(0,c_2''(u),\cdots, c_n''(u))$.
\end{Lem}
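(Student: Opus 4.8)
The plan is to prove the two conditions coincide by computing the second jet of the center manifold explicitly and feeding it into (P3)''. The only gap between (P3) and (P3)'' is the relation between the vector $N'=(0,c_2''(0),\dots,c_n''(0))$ and the quantity $M^{-1}G_{uu}$ appearing in (P3); once I establish that $N'=(0,-M^{-1}G_{uu})$, the expression (P3)'' becomes (P3) verbatim.

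First I would write $h(u):=(c_2(u),\dots,c_n(u))$ for the $y$-component of the center manifold at $\eps=0$, so that $N'=(0,h''(0))$ while $N=(1,0,\dots,0)$ selects the $u$-direction. By Theorem \ref{linear1} (with the linear center block $C=0$, since $D_{(u,y)}F(0,0,0)=0$), the function $h$ satisfies the invariance equation
\[
h'(u)\,F(u,h(u),0)=M\,h(u)+G(u,h(u),0),
\]
together with the normalizations $h(0)=0$ and $h'(0)=0$.

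The heart of the argument is to differentiate this identity twice in $u$ and evaluate at $u=0$. Using $F(0,0,0)=0$, $h'(0)=0$, and $\tfrac{d}{du}F(u,h(u),0)\big|_{0}=0$ (which follows from $D_{(u,y)}F(0,0,0)=0$), every term on the left-hand side collapses. On the right-hand side the only surviving contributions are $M\,h''(0)$ and $\tfrac{d^2}{du^2}G(u,h(u),0)\big|_{0}=G_{uu}(0,0,0)$, because the cross and quadratic terms carry a factor $h'(0)=0$ and the term $G_y(0,0,0)\,h''(0)$ vanishes by $DG(0,0,0)=0$. This yields $M\,h''(0)+G_{uu}(0,0,0)=0$, and since $M$ has no zero eigenvalue it is invertible, whence $h''(0)=-M^{-1}G_{uu}(0,0,0)$ and $N'=(0,-M^{-1}G_{uu})$.

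Finally I would substitute $N=(1,0,\dots,0)$ and $N'=(0,-M^{-1}G_{uu})$ into (P3)''. Then $N^{\top}D_x^2\det(D_xV)N=D_{uu}\det(D_xV)$, while the vanishing $u$-entry of $N'$ kills the $D_u$ part and leaves $D_x\det(D_xV)\,N'=-D_y(\det D_xV)(M^{-1}G_{uu})$, so that (P3)'' reads exactly
\[
D_{uu}\det DV-D_y(\det DV)(M^{-1}G_{uu})\neq 0,
\]
which is (P3). The main obstacle is the bookkeeping in the twice-differentiated invariance equation: one must verify carefully that the chain-rule terms $2h''(0)\,\tfrac{d}{du}F|_0$, $G_y(0,0,0)\,h''(0)$, and the mixed and quadratic terms in $h'$ all vanish, which is precisely where the hypotheses $h'(0)=0$ and the vanishing of the first derivatives of $F$ and $G$ at the origin are used.
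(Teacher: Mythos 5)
Your proof is correct and follows essentially the same route as the paper: both derive $h''(0)=-M^{-1}G_{uu}(0)$ by differentiating the center-manifold invariance relation twice at the origin (the paper writes it as $V(c(u))=a(u)c'(u)$, whose $y$-component is exactly your equation $h'\cdot F=Mh+G$) and then substitute $N=(1,0,\dots,0)$ and $N'=(0,h''(0))$ into (P3)''. As a minor point, your sign is the right one --- the paper's proof states $N'=(0,M^{-1}G_{uu}(0))$, which is a typo for $(0,-M^{-1}G_{uu}(0))$, as the statement of (P3) and the ``Missing Half of (P3)'' example both confirm.
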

\begin{proof}First of all, it is easy to see that $N=(1,0,0)$ is the center direction at $(0,0)$. Hence we can  the center manifold for $V$ to be $c(u)=(u,c_2(u),\cdots,c_n(u)).$
Denote by $N'(u)=(c_1''(u),c''_2(u),\cdots, c''_n(u)).$
Then $N'=N'(u_0)$ where $u_0$ is the point such that $c(u_0)=0$.
By the local center manifold theorem, we have
\begin{equation}\label{centerformula00}V(c(u))=a(u)c'(u),
\end{equation} where $a(u): \mathbb{R}^1\rightarrow \mathbb{R}^1$ is the scaling.  At $u=u_0,$ we obtain $a(u_0)=0.$  Differentiating the equation \ref{centerformula00}, we have 
\begin{equation}\label{centerformula1}DV(c(u))c'(u)=a'(u)c'(u)+a(u)c''(u).
\end{equation}
Hence at $u=u_0$, we have $a'(u_0)=0.$ Moreover, differentiating the equation \ref{centerformula1}, we obtain
\begin{equation*}c'(u)^{\top}D^2V(c(u))c'(u)+DV(c(u))c''(u)=a''(u)c'(u)+a'(u)c'(u)+a'(u)c''(u)+a(u)c'''(u).\end{equation*}
Hence at $u=u_0$, we obtain 
$N^{\top}D^2V(0)N+DV(0)N'=a''(u_0)N.$
Hence 
\begin{equation}\label{N'}DV(0)N'=a''(u_0)N-N^{\top}D^2V(0)N.
\end{equation}  Plugging $V$ into equation \ref{N'}, we obtain $N'=(0, M^{-1}G_{uu}(0))$ and $$N^{\top}D^2\det(D_xV)N=D_{uu}(\det(DV)).$$
Hence we know that 
(P3) is equivalent to (P3)'' .
\end{proof}

Now let's give the proof of Theorem \ref{EP}.
\begin{proof}[The proof of Theorem \ref{EP}] Due to the conditions (P0),(P1) and (P2), we have 
\begin{equation}\label{(P1)2}D_{\eps}V(x_0,\eps_0)=0 \text{ and } D_{\eps}(\text{det}(D_xV)(x_0,\eps_0))\neq0.
\end{equation}
Now let's consider the solution of the  implicit function:
$\det(D_xV)(x,\eps(x))=0.$
By Equation \ref{(P1)2} and the implicit function theorem, we have
$$D_x(\eps(x_0))=-\frac{D_{x}(\text{det}(D_xV)(x_0,\eps_0))}{D_{\eps}(\text{det}(D_xV)(x_0,\eps_0))}.$$
Denote the graph of the center manifold by
$c(u)=(u,c_2(u),\cdots,c_n(u)).$ Then the restriction of $(x,\eps(x))$ to the center manifold becomes:
$(c(u), \eps(c(u))).$ We claim two facts: 

\textbf{Claim 1}: $D_u(\eps(c(u)))\vert_{c(u)=x_0}=0;$
 and \textbf{Claim 2:} $D^2_u(\eps(c(u)))\vert_{c(u)=x_0}\neq 0$.

Claim 1 follows from the following equality:
\begin{equation*}D_u(\eps(c(u)))\vert_{c(u)=x_0}=D_x(\eps(c(u)))D_u(c(u))=\frac{D_{x}(\text{det}(D_xV)(x_0,\eps_0))}{D_{\eps}(\text{det}(D_xV)(x_0,\eps_0))}D_u(c(u))=0,
\end{equation*}
where the second equality holds because of the index assumption (as Claim 1 in the proof of Theorem \ref{BB}). 
Moreover, Claim 2 holds because of Lemma \ref{P3invariant}. 
By  Claim 1 and Claim 2, we know locally the graph of $\eps(c(u))$ satisfies either $\eps(c(u))>0, $ or $\eps(c(u))<0$. Without lose of generality, we assume $\eps(c(u))>0$. Hence for sufficiently small $\eps>0$,there exist at most two points on the center manifold such that $\det(D_xV(c(u),\eps))=0$. Moreover, for sufficiently small $\eps>0$,  there are at most three zeros for $V(c(u),\eps)$. Otherwise by the mean value theorem, there will be more than three points with $\det(D_xV(c(u),\eps))=0$ which is a contradiction. 
By Theorem \ref{BB}, there exists at least three points. Hence we have exactly one to three bifurcation, i.e., pitchfork bifurcation. 
 Hence the proof is complete. 
\end{proof}

\begin{Lem}\label{p3}(P3)' in Lemma \ref{P3invariant} is invariant under the linear change $x=A\tilde{x}$.
\end{Lem}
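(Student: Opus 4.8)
The plan is to track how the function $g(x):=\det(D_xV(x))$ and the center manifold transform under the linear change $x=A\tilde x$, and then to exploit the fact that the first derivative of $g$ along the center direction vanishes. First I would record the transformation law for the vector field: writing $\tilde V(\tilde x)=A^{-1}V(A\tilde x)$ one has $D_{\tilde x}\tilde V(\tilde x)=A^{-1}\,(D_xV)(A\tilde x)\,A$, so that
\[
\det\bigl(D_{\tilde x}\tilde V\bigr)(\tilde x)=\det\bigl(D_xV\bigr)(A\tilde x).
\]
Thus the Jacobian determinant is a genuine invariant: with $\tilde g:=\det(D_{\tilde x}\tilde V)$ we have $\tilde g=g\circ A$.

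Next I would set up the correspondence of center manifolds. Since $A$ is a linear isomorphism conjugating the flows of $\tilde V$ and $V$, it carries the one–dimensional center manifold $\tilde c$ of $\tilde V$ onto the center manifold $c$ of $V$. Parametrizing both as graphs over their respective center eigenlines, this means $A\tilde c(\tilde u)=c(\varphi(\tilde u))$ for some local diffeomorphism $\varphi$ with $\varphi(\tilde u_0)=u_0$ and $\varphi'(\tilde u_0)\neq0$, where $c(u_0)=x_0$. Composing with the invariance of $g$ gives the single clean relation
\[
(\tilde g\circ\tilde c)(\tilde u)=g\bigl(A\tilde c(\tilde u)\bigr)=(g\circ c)\bigl(\varphi(\tilde u)\bigr),
\]
so that $\tilde g\circ\tilde c$ is merely a reparametrization of $g\circ c$.

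Then I would differentiate twice and use the crucial fact that the first derivative of $g$ restricted to the center manifold vanishes at $x_0$. By the chain rule,
\[
(\tilde g\circ\tilde c)''=\bigl((g\circ c)''\circ\varphi\bigr)(\varphi')^2+\bigl((g\circ c)'\circ\varphi\bigr)\varphi''.
\]
Now $(g\circ c)'(u_0)=Dg(x_0)\,c'(u_0)$, and $c'(u_0)$ spans the center eigendirection $v_r$; the quantity $Dg(x_0)v_r$ is exactly what is shown to vanish in Claim 1 of the proof of Theorem \ref{BB} (the derivative of $\det(D_xV)$ in the center direction is zero), and it is coordinate independent since under $x=A\tilde x$ one computes $D\tilde g(\tilde x_0)\tilde v_r=Dg(x_0)v_r$ with $\tilde v_r=A^{-1}v_r$. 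Hence the second term drops at $\tilde u_0$, leaving
\[
(\tilde g\circ\tilde c)''(\tilde u_0)=(g\circ c)''(u_0)\,\bigl(\varphi'(\tilde u_0)\bigr)^2.
\]
Because $\varphi'(\tilde u_0)\neq0$, the right-hand side is nonzero precisely when $(g\circ c)''(u_0)$ is, which is the claimed invariance of (P3)$'$; in fact the sign is preserved as well.

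The step I expect to be the main obstacle is justifying the two geometric inputs: that the first derivative $(g\circ c)'(u_0)$ genuinely vanishes, which forces me to quote the normal-form computation inside the proof of Theorem \ref{BB} and to check its coordinate independence as above, and that the center manifolds of $V$ and $\tilde V$ correspond via $A$ up to a regular reparametrization $\varphi$. The latter carries the mild subtlety that a center manifold is not unique; but precisely because the first derivative vanishes, the value $(g\circ c)''(u_0)$ is altered only by the positive factor $(\varphi')^2$ under any admissible reparametrization, so the nonvanishing in (P3)$'$ is both well posed and invariant.
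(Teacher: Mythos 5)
Your proof is correct, and its first half coincides exactly with the paper's argument: from $\tilde V(\tilde x)=A^{-1}V(A\tilde x)$ one gets $\det(D_{\tilde x}\tilde V)=\det(D_xV)\circ A$, and $A$ carries a center manifold of $\tilde V$ to one of $V$, so the restricted determinant is unchanged. Where you go beyond the paper is in the parametrization. The paper's proof simply transports the parameter $u$ along with the manifold (the new center manifold is $A^{-1}c(u)$, still parametrized by the same $u$), so the restricted function of $u$ is literally identical and the second derivative is trivially the same number. You instead allow the two center manifolds to be parametrized as graphs over their respective center eigenlines, which forces a reparametrization $\varphi$, and you then need the extra input that $(g\circ c)'(u_0)=D(\det D_xV)(x_0)\,c'(u_0)=0$ (the Jacobi-formula computation from Claim 1 in the proof of Theorem \ref{BB}, whose coordinate independence you correctly verify) to kill the $\varphi''$ term and conclude $(\tilde g\circ\tilde c)''(\tilde u_0)=(g\circ c)''(u_0)(\varphi'(\tilde u_0))^2$. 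This buys something real: it shows that the nonvanishing (and even the sign) in (P3)$'$ is independent not only of the linear change of coordinates but of the choice of admissible parametrization, which is the natural reading of (P3)$'$ given that the subsequent lemma writes the center manifold as a graph $(u,c_2(u),\dots,c_n(u))$ over the center eigendirection; the paper's shorter proof silently fixes the parametrization and so does not address this. Your closing remark about non-uniqueness of the center manifold conflates two issues (different submanifolds versus different parametrizations of one submanifold), but it is harmless here since all local center manifolds share the same $2$-jet at the fixed point, so $(g\circ c)''(u_0)$ does not depend on the choice.
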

\begin{proof}Assume the change of coordinates to be $x=A\tilde{x}$. Then the vector field $\dot{x}=V(x)$ becomes  
$\dot{\tilde{x}}=A^{-1}V(A\tilde{x}):=\tilde{V}(\tilde{x},\eps).$ 
For this vector field, we have $
D_{\tilde{x}}\tilde{V}=A^{-1}D_xV(A\tilde{x})A.$
Moreover, it follows that
$\det(D_{\tilde{x}}\tilde{V})=\det(A^{-1}D_xV(A\tilde{x},\eps)A)
=\det D_xV(A\tilde{x}).$
Assume $(c_1(u),\cdots,c_n(u))$ to be the center manifold for $V$. Then it follows directly from the invariance of center manifold, the center manifold after changing of coordinates becomes $A^{-1}(c_1(u),\cdots,c_n(u))$. Hence
\begin{eqnarray*}
\det(D_x\tilde{V}(A^{-1}(c_1(u),\cdots,c_n(u))))=\det (D_xV(c_1(u),\cdots,c_n(u))).
\end{eqnarray*}
So we have $(P3)'$ is invariant under changing of coordinates.
\end{proof}

\begin{proof}[The proof of Theorem \ref{PG}] Based on Theorem \ref{EP} and Lemma \ref{p3}, we only need to prove that the vector fields with the condition (P3) are open and dense inside $\mathcal{F}$. 
Since $(0, M^{-1}G_{uu}(0))$ is decided by  $(D_xV, D_x^2V)$ (order two terms in the expansion) of the vector field $V(x)$ at $(x_0,\eps_0)$, where we denote $x=(u,y)$.  
Besides, $D_x(\det(D_xV))\vert_{(x_0,\eps_0)}$ is also determined by $(D_xV,D^2_xV)$ at $(x_0,\eps_0).$ 
On the other hand, since
$D_x^2\det D_xV(x)$ is decided by $(DV,D^2V,D^3V)$, we can just perturb $V$ such that we only change $D^3V$ such that  (P3)
holds. 
Hence we know the maps with (P3) is open and dense inside $\mathcal{F}$.
\end{proof}

Enrique Pujals, Department of Mathematics, Graduate Center, City University of New York, 365 5th Ave, New York, NY 10016. epujals@gc.cuny.edu

Michael Shub, Department of Mathematics, City College of  New York, 160 Convent Ave, New
York, NY 10031. mshub@ccny.cuny.edu

Yun Yang, Department of Mathematics, City College of  New York, 160 Convent Ave, New
York, NY 10031. yyang@gc.cuny.edu
  

\begin{thebibliography}{99}

\bibitem{BG}K. Burns and M. Gidea, ``Differential Geometry and Topology With a View to Dynamical Systems". Chapman and Hall/CRC, 2005.
\bibitem{CH}S.N. Chow, J.K. Hale, Methods of Bifurcation Theory, Springer, New York, 1982.
\bibitem{CR1}M.G. Crandall, P.H. Rabinowitz, Bifurcation from simple eigenvalues, J. Funct. Anal. 8 (1971) 321–340.
\bibitem{CR2}M.G. Crandall, P.H. Rabinowitz, Bifurcation, perturbation of simple eigenvalues and linearized stability, Arch. Ration. Mech. Anal. 52 (1973) 161–180.
\bibitem{Car}J. Carr, ``Applications of Center manifold Theory", Springer-Verlag, NewYork,1981.
\bibitem{KS}K. Kirchgassner. \& P. Sorger, Stability analysis of branching solutions of the Navier-
Stokes Equations. Proc. 12th Int. Congr. Appl. Mech., Edited by M. Hetenyi and W. G.
Vincenti. New York: Springer-Verlag.

\bibitem{K}Y. Kuznetsov, ``Elements of applied bifurcation theory", Springer-Verlag, York, second edition, 1998.

\bibitem{LSW}P Liu, J. Shi and Y. Wang,`` 
Imperfect transcritical and pitchfork bifurcations", Journal of Functional Analysis 251 (2007) 573–600


\bibitem{N}L. Nirenberg, Variational and topological methods in nonlinear problems, Bull. Amer. Math. Soc. (N.S.) 4 (3) (1981)
267–302.
\bibitem{HPS}M.W. Hirsch, P. C. Pugh, M. Shub. ``Invariant manifolds". Vol. 583. Springer, 2006.



\bibitem{R1}P. H. Rabinowitz, Some global results for nonlinear eigenvalue problems, J. Funct. Anal. 7 (1971), 487–513.

\bibitem{R}D. Ruelle. ``Bifurcations in the presence of a symmetry group." Archive for Rational Mechanics and Analysis 51.2 (1973): 136-152.

\bibitem{RS1}I. Rajapakse, and S. Smale. ``Mathematics of the genome." Foundations of Computational Mathematics (2015): 1-23.
\bibitem{RS3}I. Rajapakse, and S. Smale. ``Emergence of function from coordinated cells in a tissue." Proceedings of the National Academy of Sciences 114.7 (2017): 1462-1467.
\bibitem{RS2}I. Rajapakse, and S. Smale. ``The Pitchfork Bifurcation." International Journal of Bifurcation and Chaos 27.09 (2017).



\bibitem{S}D. H. Sattinger, Stability of bifurcating solutions by Leray-Schauder degree, Archive Rat. Mech. Anal., to appear.
\bibitem{W}S. Wiggins, ``Introduction to
Applied Nonlinear Dynamical Systems and Chaos"
, Springer, 2003. 
\end{thebibliography}
 \end{document}